\newtheorem{lem}{Lemma}
\newtheorem{cor}[lem]{Corollary}
\newtheorem{thm}[lem]{Theorem}
\newtheorem{thm*}{Theorem}
\newtheorem{prop}[lem]{Proposition}
\theoremstyle{definition}
\newtheorem{defin}[lem]{Definition}
\newcommand{\slice}{\operatorname{Slice}_{\omega_1}}
\newcommand{\slicetwo}{\operatorname{Slice}_{\omega_2}}
\newcommand{\slicekappa}{\operatorname{Slice}_{\kappa}}
\newcommand{\slicethree}{\operatorname{Slice}_{\omega_3}}
\newcommand{\ch}{\operatorname{CH}}
\newcommand{\ma}{\operatorname{MA}}
\newcommand{\zfc}{\operatorname{ZFC}}
\newcommand{\ac}{\operatorname{AC}}
\newcommand{\baire}{\omega^{\omega}}
\newcommand{\anal}{\Sigma^1_1}
\newcommand{\coanal}{\Pi^1_1}
\newcommand{\sym}{\operatorname{c(Sym(\omega))}}
\newcommand{\dom}{\operatorname{dom}}
\newcommand{\rg}{\operatorname{rg}}
\title{The Slicing Axioms}
\author{Ziemowit Kostana\footnote{Research of Z. Kostana was supported by the GAČR project EXPRO 20-31529X and
RVO: 67985840.},\\
Institute of Mathematics of the Czech Academy of Sciences, Czech Republic\\
University of Warsaw, Poland\\
Saharon Shelah\footnote{Research of second author partially supported by Israel Science Foundation (ISF) grant no: 1838/19.
Paper number 1210 on the publication list.}
\footnote{Research of both authors partially supported by NSF grant no: DMS 1833363.}\\
Einstein Institute of Mathematics, the Hebrew University of Jerusalem, Israel\\
Department of Mathematics, Rutgers University, Piscataway, USA}
\begin{document}

\maketitle

\begin{abstract}
	We introduce the family of axioms, denoted $\slicekappa$, that claim the existence of strictly increasing decompositions of the form
	$$2^{\delta}=\bigcup_{\alpha<\kappa} 2^{\delta}\cap M_\alpha,$$
	where $\delta<\kappa$, and $\{M_\alpha|\; \alpha<\kappa\}$ is a $\subseteq$-increasing sequence of transitive models of set theory. We study compatibility of these axioms with versions of Martin's Axiom, and in particular show that $\slice$ is compatible only with some very weak form of $\ma$.
\end{abstract}

{\bf Keywords:} Martin's Axiom, Suslin forcing, transitive models \\
{\bf MSC classification:} 03E50 03E17 03E35

\section{Introduction}
\subsection{How "compact" is the real line?}

We introduce and study a family of axioms $\slicekappa$ for cardinal numbers $\kappa$. The axiom $\slicekappa$ basically claims that there exists an increasing sequence $\{M_\alpha|\; \alpha<\kappa\}$ of transitive models of $\zfc$, that decomposes the sets $2^{\delta}$, for $\delta<\kappa$, into increasing unions $$2^{\delta}=\bigcup_{\alpha<\kappa} 2^{\delta}\cap M_\alpha.$$ 
Our initial motivation was to find a single model of Martin's Axiom, which doesn't satisfy typical consequences of $PFA$. This was in turn motivated by the following intuition:

	\begin{center}If the universe is sufficiently complete, in the sense that it has many generic filters, then any transitive submodel containing enough reals, contain all the reals. 
	\end{center}

This intuition is supported for example by the following result:

\begin{thm}[Thm. 8.6, \cite{continuum}]
	If $MM$ holds, then any inner model with correct $\omega_2$ contains all reals.
\end{thm}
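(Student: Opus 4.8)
The plan is to route the argument through the generic ultrapower supplied by a saturated ideal. The first step is to invoke the Foreman--Magidor--Shelah theorem that $MM$ implies the nonstationary ideal $\mathrm{NS}_{\omega_1}$ is saturated, so that forcing with $P(\omega_1)/\mathrm{NS}_{\omega_1}$ yields a generic elementary embedding $j\colon V\to M$ with $M$ transitive, $\mathrm{crit}(j)=\omega_1$ and $j(\omega_1)=\omega_2^V$. Two facts come for free and are used throughout. First, since $j\restriction\omega_1=\mathrm{id}$, every $A\subseteq\omega_1$ in $V$ satisfies $A=j(A)\cap\omega_1\in M$, so $P(\omega_1)^V\subseteq M$ and hence $H_{\omega_2}^V\subseteq M$; in particular every real of $V$ lies in $M$ and is fixed by $j$. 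Second, saturation makes the quotient $\omega_2$-c.c., so $\omega_2^V$ is preserved and $\omega_2^M=\omega_2^V$.

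Next I would record the reduction that brings $W$ into play. Writing $j(W)$ for the inner model of $M$ obtained by applying $j$ to a defining formula for $W$, elementarity gives $j\restriction W\colon W\to j(W)$ and $M\models$ ``$j(W)$ is an inner model with $\omega_2^{j(W)}=\omega_2^M$'', so $j(W)$ has correct $\omega_2$ inside $M$. Since $j$ fixes every real $x\in V$ and $j(\mathbb{R}^W)=\mathbb{R}^{j(W)}$, we get the equivalence $x\in\mathbb{R}^W\iff x\in\mathbb{R}^{j(W)}$; as $x\in\mathbb{R}^M$ this reduces the goal $\mathbb{R}^V\subseteq W$ to proving $\mathbb{R}^V\subseteq j(W)$. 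A useful preliminary is that correctness of $\omega_2^W$ forces correctness of $\omega_1^W$ (any $V$-cardinal lying in $[\omega_1^W,\omega_2^W)$ would have to equal $\omega_1^W$), and moreover forces $W$ to contain, for every $\gamma<\omega_2^V$, a surjection $\omega_1\to\gamma$; thus $W$ already computes every ordinal below $\omega_2$ as an object of size $\omega_1$.

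The heart of the matter, and the step I expect to be the \emph{main obstacle}, is to upgrade this ``correct coding of ordinals'' to a ``correct coding of reals,'' i.e. to establish $\mathbb{R}^V\subseteq j(W)$. Bare saturation is insufficient here: representing a real by a constant function only re-derives the equivalence above and yields no new reals in $j(W)$, so the full strength of $MM$ beyond saturation must be consumed. The route I would pursue is to code each real $x$ into a canonical set $A_x\subseteq\omega_2^V$ (via an $MM$-absolute coding of reals by clubs or ladder systems in the style of bounded $MM$, or via guessing models, of which $MM$ guarantees stationarily many), arrange that $A_x$ is decoded identically in $V$ and in $M$ because both share the same $H_{\omega_2}$ and the same $\omega_2$, and then use the correctness of $\omega_2^{j(W)}$ together with the reflection properties of $MM$ to force $j(W)$ to contain the decoding of $A_x$, namely $x$.

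Assembling the pieces, for arbitrary $x\in\mathbb{R}^V$ the coding-and-reflection step places $x\in j(W)$, and the reduction of the second paragraph then gives $x\in\mathbb{R}^W\subseteq W$; as $x$ was arbitrary, $\mathbb{R}^V\subseteq W$. The delicate points to verify are the absoluteness of the chosen coding between $V$ and the generic ultrapower $M$, and the confirmation that the guessing/reflection hypothesis deposits the decoded real into the inner model $j(W)$ rather than merely into $M$; this last verification is precisely where the hypothesis that $\omega_2^W$ is correct is used.
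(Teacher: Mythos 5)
This statement is one the paper only quotes for motivation --- it is Theorem 8.6 of Moore's survey \cite{continuum}, due to Veli\v{c}kovi\'c --- so there is no proof in the text to compare against; your attempt has to be judged against the known argument. Judged that way, it has a genuine gap, and you flag it yourself: the paragraph you call ``the heart of the matter'' is not an argument but a description of what an argument would need to accomplish. The hoped-for device --- an $MM$-absolute coding of an arbitrary real by objects that any inner model with correct $\omega_1$ and $\omega_2$ automatically contains, together with an absolute decoding --- \emph{is} the theorem. It is exactly what Veli\v{c}kovi\'c's oscillation coding supplies: under $MM$, for every ladder system $\bar{C}$ on $\omega_1$ (in particular one chosen \emph{inside} $W$; being a ladder system is absolute once $\omega_1^W=\omega_1$) and every real $r$, there are ordinals below $\omega_2$ from which $r$ is recovered by a computation absolute to any transitive model containing $\bar{C}$ and those ordinals and computing $\omega_1$ correctly; since $\omega_2^W=\omega_2$, the model $W$ contains these parameters and hence $r$. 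You are right that bare saturation is insufficient and that the surplus strength of $MM$ must be consumed, but saying you would ``use the reflection properties of $MM$ to force $j(W)$ to contain the decoding'' presupposes precisely the transfer from correctness of cardinals to capture of reals that is to be proved; at its only essential step the proposal is circular.

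The generic-ultrapower scaffolding, besides doing no work (you concede that constant functions merely reproduce the original problem for $j(W)$, and if the coding lemma were in hand it would apply directly to $W$ with no embedding at all), is also partly incorrect in a way that undermines your plan. Saturation of $\mathrm{NS}_{\omega_1}$ gives $j(\omega_1)=\omega_2^V$, so $\omega_2^V$ becomes the $\omega_1$ of $M$, not its $\omega_2$: the claim $\omega_2^M=\omega_2^V$ is false, and with it the later assertion that $V$ and $M$ ``share the same $H_{\omega_2}$ and the same $\omega_2$,'' on which the absoluteness step of your coding plan leans. Likewise, only $P(\omega_1)^V\subseteq M$ follows from $\operatorname{crit}(j)=\omega_1$; a code $A_x\subseteq\omega_2^V$ produced in $V$ need not be an element of $M$ at all, since $P(\omega_2)^V\subseteq M$ is not available. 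Finally, $j(W)$ only makes sense if $W$ is definable (or via some device such as $\bigcup_{\alpha}j(W\cap V_\alpha)$, which then requires a separate argument that the union models $ZFC$ in $M$), whereas the theorem concerns arbitrary inner models. The correct architecture needs none of this: fix the coding parameter inside $W$, apply $MM$ in $V$ to produce the codes, and use correctness of $\omega_2^W$ to see that $W$ already contains them.
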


The conclusion is quite strong, so it makes sense to ask what is left if we weaken $\operatorname{MM}$ to $\ma_{\omega_1}$. This motivated us to formulate the axiom $\slice$, which turned out to be inconsistent with $\ma_{\omega_1}$. The main results of this paper are the following

\begin{thm}[Thm. \ref{martinsaxiom}]
	$\slice \implies \lnot \operatorname{MA}_{\omega_1}(\text{$\sigma$-centred})$.
\end{thm}

\begin{thm}[Thm. \ref{mainthm}]
	If $\kappa$ is a regular cardinal such that $\kappa^\omega=\kappa$, then the following theory is consistent 
	$$\zfc + \ma(\text{Suslin}) + \slice + ``2^\omega=\kappa".$$
\end{thm}

\begin{thm}[Thm. \ref{mainthm2}]
Assume that $\omega<\kappa\le\theta$ are regular cardinals, such that $\theta^{<\kappa}=\theta$. Then the following theory is consistent for any cardinal $\lambda<\kappa$:
$$\zfc+\ma_{\lambda}+\slicekappa + ``2^\omega=\theta".$$
\end{thm}

The first of these results provides another argument in favor of the informal claim from the beginning.
The class of Suslin forcings is a class of c.c.c. forcings, which admit simple (analytic) definitions (see Definition \ref{Suslindef}). This class is more extensively described in \cite{b-j}. Martin's Axiom for this class is a considerable weakening of the full $\ma$.

\begin{thm}[\cite{292}] $\ma(\text{Suslin})$ implies each of the following:
	\begin{enumerate}
		\item $\operatorname{Add}(\mathcal{N})=2^\omega$,
		\item $\operatorname{Add}(\mathcal{SN})=2^\omega$,
		\item $2^\omega$ is regular,
		\item each MAD family of subsets of $\omega$ has size $2^\omega$.
	\end{enumerate}
\end{thm}

It follows from 1. that all cardinal characteristics in the Cicho\'n's diagram have value $2^\omega$. $\mathcal{SN}$ stands for the class of strong measure zero sets.

\begin{thm}[\cite{292}] $\ma(\text{Suslin})$ does not imply any of the following:
	\begin{enumerate}
		\item $\mathfrak{t}=2^\omega$,
		\item $\mathfrak{s}=2^\omega$,
		\item $\forall\; {\kappa<2^\omega}\quad 2^\kappa=2^\omega$,
		\item there is no Suslin tree.
	\end{enumerate}
\end{thm}

For an elaborated discussion of cardinal invariants of the continuum we refer the reader to \cite{blass}. Finally, it should be noted that our axioms $\slice$ is similar to the axioms $\diamondsuit^{\operatorname{Cohen}}$ introduced recently in \cite{diamondcohen}.
 
\subsection{Preliminaries} 
All non-standard notions are introduced in the subsequent sections. By \emph{reals} we mean elements of the sets $\baire$, $2^\omega$, or seldom $\mathbb{R}$. We take liberty to freely identify Borel functions with their Borel codes, so whenever we claim that
$$f \in M \models \zfc$$
for some Borel function $f\subseteq 2^\omega \times 2^\omega$, it should be understood that it is the Borel code of $f$ that belongs to $M$ (so we don't bother if, for instance, $\dom{f} \not\subseteq M$). For a cardinal $\kappa$,  $\mathbb{C}_\kappa=\{p:\dom(p)\rightarrow 2|\; \dom(p)\in [\kappa]^{<\omega}\}$, and $\mathbb{C}=\mathbb{C}_\omega$.

When we write $\mathbb{P}=\{\mathbb{P}_\alpha \ast \dot{\mathbb{Q}}_\alpha|\; \alpha<\theta \}$ for a finite support iteration of forcings, we sometimes denote by $\mathbb{P}$ the final step of the iteration, that is $\mathbb{P}=\mathbb{P}_\theta$. When working with infinite iterations, we assume that $\mathbb{P}_0$ is the trivial forcing. A function $i:\mathbb{P}_0 \hookrightarrow \mathbb{P}_1$ is a \emph{complete embedding} if the following assertions hold:
\begin{enumerate}
	\item $\forall\;{p,q \in \mathbb{P}_0}\quad p_0\le p_1 \implies i(p_0)\le i(p_1)$,
	\item $\forall\;{p,q \in \mathbb{P}_0}\quad p_0\bot p_1 \implies i(p_0)\bot i(p_1)$,
	\item If $\mathcal{A} \subseteq \mathbb{P}_0$ is a maximal antichain, then $i[\mathcal{A}]\subseteq \mathbb{P}_1$ is a maximal antichain.
\end{enumerate}
We write $\mathbb{P}_0 \lessdot\mathbb{P}_1$ if $\mathbb{P}_0\subseteq \mathbb{P}_1$ and the inclusion is a complete embedding. The importance of complete embeddings comes from the closely related notion of \emph{quotient forcing}. If $\mathbb{P}_0\lessdot \mathbb{P}_1$, and $G\subseteq \mathbb{P}_0$ is a filter generic over $V$, then $V[G]$ contains the quotient forcing $\mathbb{P}_1/G\subseteq \mathbb{P}_1$, consisting of all conditions $p \in \mathbb{P}_1$ that are compatible with every element of $G$. The crucial property of this notion is that forcing with $\mathbb{P}_1$ is equivalent to forcing with a two-step iteration: $\mathbb{P}_0\ast \mathbb{P}_1/\dot{G}$ (see \cite{kunen}, p.244).

We will be frequently using the following two observations.

\begin{prop} \label{observation}
	If $V$ is a countable transitive model of $\zfc$, $\mathbb{P}_0,\mathbb{P}_1 \in V$, and $\mathbb{P}_0 \subseteq\mathbb{P}_1$ is an inclusion of partial orders, then the following conditions are equivalent:
	\begin{enumerate}
		\item $\mathbb{P}_0 \lessdot\mathbb{P}_1$,
		\item If a filter $G\subseteq \mathbb{P}_1$ is $\mathbb{P}_1$-generic over $V$ then
		$G \cap \mathbb{P}_0$ is $\mathbb{P}_0$-generic over $V$.
	\end{enumerate}
\end{prop}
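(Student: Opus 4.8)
The plan is to prove both implications, noting first that since the map in question is the inclusion $\mathbb{P}_0 \subseteq \mathbb{P}_1$, clause (1) of the definition of complete embedding (monotonicity) holds trivially. Thus $\mathbb{P}_0 \lessdot \mathbb{P}_1$ is equivalent to the conjunction of the incompatibility-preservation clause (2) and the maximal-antichain clause (3), and I would organize the whole argument around these two clauses.

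For the forward direction $(1)\Rightarrow(2)$, I would fix a $\mathbb{P}_1$-generic filter $G$ over $V$ and verify directly that $G\cap\mathbb{P}_0$ is a $\mathbb{P}_0$-generic filter. Upward closure in $\mathbb{P}_0$ is immediate from upward closure of $G$ in $\mathbb{P}_1$, since the order on $\mathbb{P}_0$ is the restriction of the order on $\mathbb{P}_1$. To see that $G\cap\mathbb{P}_0$ meets every dense $D\subseteq\mathbb{P}_0$ lying in $V$, I would, working inside $V$, thin $D$ to a maximal antichain $\mathcal{A}\subseteq D$ of $\mathbb{P}_0$; clause (3) makes $\mathcal{A}$ a maximal antichain of $\mathbb{P}_1$, so the $\mathbb{P}_1$-generic $G$ meets it, and the witness already lies in $D\cap\mathbb{P}_0\cap G$. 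Directedness then follows by applying this genericity to the set of conditions of $\mathbb{P}_0$ that are either below two given elements $p,q\in G\cap\mathbb{P}_0$ or incompatible (in $\mathbb{P}_0$) with one of them; this set is dense in $\mathbb{P}_0$, and clause (2) rules out the incompatible alternatives inside $G$, forcing a common lower bound into $G\cap\mathbb{P}_0$.

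For the reverse direction $(2)\Rightarrow(1)$ I would argue by contraposition, and here the countability of $V$ is the essential ingredient: for every $r\in\mathbb{P}_1$ there exists a $\mathbb{P}_1$-generic filter over $V$ containing $r$. If clause (2) fails, I pick $p,q\in\mathbb{P}_0$ incompatible in $\mathbb{P}_0$ but possessing a common lower bound $r\in\mathbb{P}_1$, and take a generic $G\ni r$; then $p,q\in G\cap\mathbb{P}_0$ witness that $G\cap\mathbb{P}_0$ is not directed, hence not a $\mathbb{P}_0$-filter. If clause (2) holds but clause (3) fails, I take a maximal antichain $\mathcal{A}\subseteq\mathbb{P}_0$ in $V$ that is not maximal in $\mathbb{P}_1$; clause (2) ensures $\mathcal{A}$ is still an antichain in $\mathbb{P}_1$, so there is $r\in\mathbb{P}_1$ incompatible with every member of $\mathcal{A}$. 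A generic $G\ni r$ then misses $\mathcal{A}$ entirely, while maximality of $\mathcal{A}$ in $\mathbb{P}_0$ makes its downward closure a dense subset of $\mathbb{P}_0$ in $V$; thus $G\cap\mathbb{P}_0$ fails to meet this dense set and cannot be $\mathbb{P}_0$-generic.

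The only genuine obstacle is bookkeeping: keeping the two incompatibility relations ($\bot$ computed in $\mathbb{P}_0$ versus in $\mathbb{P}_1$) strictly apart, and invoking clause (2) at exactly the points where a statement about $\mathbb{P}_1$ must be pushed down to $\mathbb{P}_0$. Everything else is the standard interplay between dense sets, maximal antichains, and genericity, combined with the existence of generic filters over a countable transitive model.
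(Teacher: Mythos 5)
The paper states Proposition \ref{observation} without proof, treating it as a known observation (it is the standard characterization of complete suborders via generic filters over countable transitive models, essentially as in Kunen), so there is no proof of record to compare yours against; your blind proof is correct and is exactly the standard argument one would supply. Your key structural points are all right: clause (1) of the paper's definition of complete embedding is vacuous for an inclusion of partial orders; in the forward direction, meeting a dense $D \subseteq \mathbb{P}_0$ from $V$ correctly reduces, inside $V$, to meeting a maximal antichain $\mathcal{A} \subseteq D$ of $\mathbb{P}_0$, which clause (3) promotes to a maximal antichain of $\mathbb{P}_1$; and your use of clause (2) to kill the ``incompatible in $\mathbb{P}_0$'' alternatives when proving directedness of $G \cap \mathbb{P}_0$ is precisely where the incompatibility-preservation hypothesis is needed (two elements of $G$ are compatible in $\mathbb{P}_1$, and clause (2) transfers this back to $\mathbb{P}_0$). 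In the converse you correctly isolate the role of countability of $V$ --- the Rasiowa--Sikorski existence of a generic filter through a prescribed condition $r \in \mathbb{P}_1$ --- and your two cases (failure of clause (2), then failure of clause (3) with clause (2) intact so that $\mathcal{A}$ remains an antichain of $\mathbb{P}_1$) exhaust the ways $\mathbb{P}_0 \lessdot \mathbb{P}_1$ can fail. The only step worth spelling out in a final write-up is why, in the second case, $G \cap \mathbb{P}_0$ misses the downward closure of $\mathcal{A}$ in $\mathbb{P}_0$: if some $t \in G \cap \mathbb{P}_0$ lay below $a \in \mathcal{A}$, then upward closure of $G$ in $\mathbb{P}_1$ would force $a \in G$, contradicting that $G$ avoids $\mathcal{A}$; your text asserts the right claim and this one-line justification completes it.
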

	
\begin{prop} \label{aswellas}
	Let $\mathbb{P} \lessdot \mathbb{S}$ be any forcing notions, and fix $p \in \mathbb{P}$. Let $\dot{q}$, $\dot{r}$ be $\mathbb{P}$-names, and finally let $\sigma(-,-)$ be a formula with parameters in the ground model, which is also absolute between transitive models of set theory (for example a $\anal$ formula in the language of arithmetic, or a bounded formula in the language of set theory). Then
	$$p \Vdash_{\mathbb{S}} \sigma(\dot{q},\dot{r}) \iff p \Vdash_{\mathbb{P}} \sigma(\dot{q},\dot{r}).$$
\end{prop}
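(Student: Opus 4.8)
The plan is to reduce both implications to a single observation about how $\mathbb{P}$-names are interpreted, and then to glue the two relevant generic extensions together using the absoluteness of $\sigma$. First I would record the elementary fact that, viewing $\dot q,\dot r$ as $\mathbb{S}$-names (which is legitimate since $\mathbb{P}\subseteq\mathbb{S}$), one has $\dot q^{H}=\dot q^{G}$ and $\dot r^{H}=\dot r^{G}$ whenever $G=H\cap\mathbb{P}$. This is a straightforward induction on the rank of the name: the interpretation of a name only consults membership of conditions $s\in\mathbb{P}$ in the generic filter, and for such $s$ one has $s\in H$ iff $s\in H\cap\mathbb{P}=G$. Consequently $\sigma(\dot q^{G},\dot r^{G})$ and $\sigma(\dot q^{H},\dot r^{H})$ are literally the same statement with the same parameters.

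For the direction $(\Leftarrow)$, assume $p\Vdash_{\mathbb{P}}\sigma(\dot q,\dot r)$ and let $H$ be $\mathbb{S}$-generic over $V$ with $p\in H$. Put $G=H\cap\mathbb{P}$. Since $\mathbb{P}\lessdot\mathbb{S}$, the filter $G$ is $\mathbb{P}$-generic over $V$ (the analogue for arbitrary models of the equivalence in Proposition \ref{observation}), and $p\in G$ because $p\in\mathbb{P}\cap H$. Hence $V[G]\models\sigma(\dot q^{G},\dot r^{G})$. By the name-interpretation fact this equals $\sigma(\dot q^{H},\dot r^{H})$, and by absoluteness of $\sigma$ between the transitive models $V[G]\subseteq V[H]$ (with parameters $\dot q^{G},\dot r^{G}\in V[G]$ together with the ground-model parameters of $\sigma$) the statement transfers upward to $V[H]$. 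As $H$ was an arbitrary generic containing $p$, we conclude $p\Vdash_{\mathbb{S}}\sigma(\dot q,\dot r)$.

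For $(\Rightarrow)$ the extra ingredient is the passage from a given $\mathbb{P}$-generic to a compatible $\mathbb{S}$-generic, which is where quotient forcing enters. Assume $p\Vdash_{\mathbb{S}}\sigma(\dot q,\dot r)$ and let $G$ be $\mathbb{P}$-generic over $V$ with $p\in G$. Working in $V[G]$, the condition $p$ lies in the quotient $\mathbb{S}/G$, since $p$ is compatible with every element of the filter $G$ containing it; so I may pick $K\subseteq\mathbb{S}/G$ generic over $V[G]$. By the factorization of $\mathbb{S}$ as $\mathbb{P}\ast\mathbb{S}/\dot G$ (\cite{kunen}), the induced $H\subseteq\mathbb{S}$ is $\mathbb{S}$-generic over $V$ with $H\cap\mathbb{P}=G$ and $V[H]=V[G][K]$; in particular $p\in G=H\cap\mathbb{P}$, so $p\in H$. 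Then $V[H]\models\sigma(\dot q^{H},\dot r^{H})$, which again coincides with $\sigma(\dot q^{G},\dot r^{G})$, and absoluteness now pushes it down to $V[G]$. As $G$ was arbitrary, $p\Vdash_{\mathbb{P}}\sigma(\dot q,\dot r)$.

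The genuinely delicate point is the two-way use of absoluteness: one must check that $\sigma$ is absolute in the strong upward-and-downward sense between $V[G]$ and the strictly larger $V[H]$, with the objects $\dot q^{G},\dot r^{G}$ lying in the smaller model $V[G]$. The hypotheses on $\sigma$ (a $\anal$ arithmetic formula, giving Mostowski absoluteness, or a bounded set-theoretic formula) are exactly what guarantees this. Everything else---the name-interpretation lemma, the genericity of $H\cap\mathbb{P}$, and the quotient factorization---is standard, so I expect the real work to lie in pinning down the absoluteness cleanly rather than in any difficult construction.
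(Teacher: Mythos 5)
Your proof is correct and takes essentially the same route as the paper's: both directions rest on the agreement of name interpretations ($\dot q[G]=\dot q[H]$ when $G=H\cap\mathbb{P}$) together with two-way absoluteness of $\sigma$, obtaining the $\mathbb{S}$-generic by extending a $\mathbb{P}$-generic via the quotient forcing in one direction and restricting an $\mathbb{S}$-generic to $H\cap\mathbb{P}$ in the other. The details you spell out (the rank induction on names and the explicit quotient factorization) are precisely what the paper's terse proof leaves implicit.
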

	
\begin{proof}
	In the direction from left to right, if $p\in G \subseteq \mathbb{P}$ is generic over the ground model $V$, then we can extend $G$ to a generic filter $G'\subseteq \mathbb{S}$. Notice that $\dot{q}[G]=\dot{q}[G']$, and $\dot{r}[G]=\dot{r}[G']$.  By the absoluteness of $\sigma(-,-)$ we have
	$$V[G]\models \sigma(\dot{q}[G],\dot{r}[G]) \iff V[G']\models \sigma(\dot{q}[G'],\dot{r}[G']).$$
	In the other direction, we proceed in a similar way, using the fact that for any generic filter $G\subseteq \mathbb{S}$, the intersection $G\cap\mathbb{P}$ is $\mathbb{P}$-generic.$\qedhere$
\end{proof}

\section{The Slicing Axioms}

\begin{defin}
Let $\kappa$ be any uncountable cardinal. We will say that $\slicekappa$ holds if there exists a $\subseteq$-increasing sequence of transitive classes (not necessarily proper) $\{M_\alpha|\; \alpha<\kappa\}$, such that the following conditions are satisfied

\begin{enumerate}
	\item $\forall\;{\alpha<\kappa}\quad (M_\alpha;\in) \models \zfc$
	\item $\forall\;{\alpha < \kappa}\quad \omega_1^{M_\alpha}=\omega_1$,
	\item $\forall\;{\delta \in[\omega,\kappa)}\quad 2^{\delta}= \displaystyle{ \bigcup_{\alpha <\kappa} 2^{\delta} \cap M_\alpha}$,
	\item $\forall\;{\alpha<\beta<\kappa}\;\forall\;{\delta \in[\omega,\kappa)}\quad 2^{\delta} \cap M_\alpha \subsetneq 2^{\delta}\cap M_\beta$.
\end{enumerate}	
\end{defin}

Let us comment briefly on the choice of the definition. The reader uncomfortable with the idea of working with models of $\zfc$ can replace $\zfc$ by some sufficiently, large finite fragment of it. Such a formal weakening has no impact on the results of the paper. Moreover, it seems very natural to consider a sequence of models the length $\kappa>\omega_1$, which is "slicing" just $2^\omega$, instead $2^\delta$, for any $\delta<\kappa$, as in condition $3$. We opted for this definition mainly because, as we will see later, slicing $2^\delta$ for uncountable $\delta$ ensures the slicing of $2^\omega$ in generic extensions. Moreover, in the main cases of our interest, the models $M_\alpha$ will be intermediate forcing extensions, and then condition $3.$ will hold. As for the condition $4.$ it is equivalent to a formally weaker

\begin{enumerate}
\item[4a.]$$\forall \; \alpha<\beta<\kappa \quad 2^\omega \cap M_\alpha \subsetneq 2^\omega \cap M_\beta.$$
\end{enumerate}

This is because, having a real $x \in 2^\omega \in (M_\beta \setminus M_\alpha)$, we can add a trail of zeros to get a sequence of any given length\footnote{of any given length belonging to $M_\beta$, to be precise. However, condition $3.$ ensures that the models from the sequence know every ordinal below $\kappa$, from some point on.} to $M_\beta \setminus M_\alpha$. For the same reason, if $\kappa=\lambda^+$, we can just require
\begin{enumerate}
	\item[4b.]$$\forall \; \alpha<\beta<\kappa \quad 2^\lambda \cap M_\alpha \subsetneq 2^\lambda \cap M_\beta.$$
\end{enumerate}

The axiom $\slicekappa$ is outright false for any singular $\kappa$.

\begin{prop}
	If $\slicekappa$ holds, then $\kappa$ is regular, and $\kappa \le 2^\omega$.
\end{prop}

\begin{proof}
	If $\delta=\operatorname{cof}{\kappa}<\kappa$, then $\slicekappa$ gives us a nontrivial decomposition of the form
	$$2^{\delta}= \bigcup_{\alpha <\kappa} 2^{\delta} \cap M_\alpha,$$
	and after passing to a cofinal sequence, we also have a decomposition
	$$2^{\delta}= \bigcup_{\alpha <\delta} 2^{\delta} \cap M_{\gamma_\alpha}.$$
	For each $\alpha < \delta$ we pick $s_\alpha \in 2^\delta \setminus M_{\gamma_\alpha}$. Using a definable bijection between $\delta$ and $\delta \times \delta$, we see that the sequence $(s_\alpha)_{\alpha<\delta}$ must belong so some model $M_\alpha$. But this contradicts the choice of $s_\alpha$. The inequality $\kappa \le 2^\omega$ follows outright from the condition $4.$ in the definition of $\slicekappa$, applied to $\delta=\omega$.$\qedhere$
\end{proof}

The most important of the slicing axioms is perhaps $\slice$, since it claims that the real line can be decomposed into an increasing union of $\omega_1$ many sets, which belong to bigger and bigger models. The fact that $\ma_{\omega_1}$ is inconsistent with $\slice$ shows, that the Martin's Axiom on $\omega_1$ imposes certain compactness on the real line. 

\section{Slicing the real line}

We begin with showing that Martin's Axiom on $\omega_1$ is not compatible with $\slice$.

\begin{thm}\label{martinsaxiom}
	$\slice \implies \lnot \operatorname{MA}_{\omega_1}(\text{$\sigma$-centred})$.
\end{thm}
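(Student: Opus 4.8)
The plan is to contradict $\operatorname{MA}_{\omega_1}(\text{$\sigma$-centred})$ by producing, from the slicing sequence $\langle M_\alpha\rangle$, a $\subseteq^*$-decreasing tower $\langle A_\alpha \mid \alpha<\omega_1\rangle$ of infinite subsets of $\omega$ with no pseudo-intersection. Such a tower immediately refutes the axiom: the poset $\mathbb{P}$ of pairs $(s,F)$ with $s\in[\omega]^{<\omega}$ and $F$ a finite subset of the tower, ordered by $(s',F')\le(s,F)$ iff $s\subseteq s'$, $F\subseteq F'$ and $s'\setminus s\subseteq\bigcap F$, is $\sigma$-centred (any two conditions sharing a first coordinate are compatible), while the $\omega_1$ dense sets $\{(s,F)\mid A_\alpha\in F\}$ together with $\{(s,F)\mid |s|\ge n\}$ admit no filter, since a filter would yield a pseudo-intersection. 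Equivalently, by Bell's theorem this is just $\mathfrak{p}=\omega_1$. Either way, everything reduces to the construction of the tower.

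I would build the tower by recursion along the chain, using the decisive observation that, since $2^\omega=\bigcup_{\alpha<\omega_1}2^\omega\cap M_\alpha$, every candidate pseudo-intersection $X$ lies in some $M_\gamma$; hence it suffices to guarantee that the tower is never pseudo-intersected \emph{from inside} a model of the chain, i.e. that for every $\gamma$ and every infinite $X\in M_\gamma$ there is $\alpha$ with $X\not\subseteq^* A_\alpha$. As the tower is $\subseteq^*$-decreasing, a failure $X\not\subseteq^* A_\alpha$ propagates to all larger indices, so it is enough to defeat, at stage $\alpha$, every infinite $X\in M_\alpha$ that is still live (those with $X\subseteq^* A_\xi$ for all $\xi<\alpha$). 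The bookkeeping is furnished by the chain itself — treating $M_\alpha$ at stage $\alpha$ exhausts $2^\omega$ — and the clause $\omega_1^{M_\alpha}=\omega_1$ lets me keep the countable initial segment $\langle A_\xi\mid\xi<\alpha\rangle$ coded by a single real, hence present in some later model, while the strict inclusions $2^\omega\cap M_\alpha\subsetneq 2^\omega\cap M_{\alpha+1}$ supply the genuinely new reals from which to draw $A_\alpha$.

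The main obstacle is exactly the successor step: finding an infinite $A_\alpha\subseteq^* A_\xi$ (for all $\xi<\alpha$) that almost-contains no infinite member of $M_\alpha$. This is a splitting-type requirement relative to $M_\alpha$, and the subtlety is that it cannot be met \emph{merely} because $M_{\alpha+1}$ has new reals — a new real may perfectly well almost-contain old infinite sets — so the argument must genuinely use that each $M_\alpha$ is a transitive model of $ZFC$ with $\omega_1^{M_\alpha}=\omega_1$ and that the models jointly capture every real. Here I expect to lean on the absoluteness and complete-embedding machinery of Proposition \ref{aswellas}: performing the diagonalization against $M_\alpha$ inside a sufficiently late model of the chain, where the relevant statement is absolute, and then transferring it downward. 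Making this diagonalization succeed uniformly at all $\omega_1$ stages, so that no infinite set in any $M_\gamma$ survives as a pseudo-intersection, is the step I expect to demand the most care, and is where the global strength of $\slice$ — rather than any single cardinal-invariant consequence — has to be exploited.
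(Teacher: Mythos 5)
Your reduction of the theorem to the existence of a $\subseteq^*$-decreasing tower $\langle A_\alpha\mid\alpha<\omega_1\rangle$ with no pseudo-intersection is correct (the $\sigma$-centred poset argument, or equivalently Bell's theorem, is standard), and this route is genuinely different from the paper's, which instead invokes the Fleissner--Miller theorem that $\operatorname{MA}_{\omega_1}(\text{$\sigma$-centred})$ makes every set of size $\omega_1$ a $Q$-set, and then codes the range of a diagonal function $f$ with $f(\alpha)\notin M_\alpha$ into a single real, forcing all of $\rg(f)$ into one model $M_\eta$. The trouble is that you never prove the one step that carries the entire weight: the construction of $A_\alpha$. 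You need an infinite $A_\alpha$ almost contained in every earlier $A_\xi$ such that no infinite $Y\in M_\alpha$ satisfies $Y\subseteq^* A_\alpha$, and, as you yourself observe, the mere fact that $2^\omega\cap M_\alpha\subsetneq 2^\omega$ gives no such set: a priori the infinite sets of $M_\alpha$ could be $\subseteq^*$-dense below the tower built so far, and nothing in the definition of $\slice$, read stage by stage, excludes this. Deferring this to the ``absoluteness and complete-embedding machinery of Proposition~\ref{aswellas}'' does not close the gap, and that proposition is in fact the wrong tool: it concerns the forcing relation for $\mathbb{P}\lessdot\mathbb{S}$, no forcing occurs anywhere in the present argument, and the statement to be transferred has the whole uncountable set $2^\omega\cap M_\alpha$ as a parameter, so it is far beyond the reach of $\Sigma^1_1$ or Shoenfield absoluteness; moreover $\slice$ does not even guarantee that $M_\alpha$ (or its set of reals) is an element of any later $M_\delta$, so ``performing the diagonalization inside a late model'' is not well defined.

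The gap is a missing idea, and it is fillable by a coding trick rather than by absoluteness. At stage $\alpha$, let $P=\{p_0<p_1<\cdots\}$ be an infinite pseudo-intersection of the countable family $\{A_\xi\mid\xi<\alpha\}$, fix $\beta\ge\alpha$ with $P\in M_\beta$ (possible since $P$ is a real and the traces $2^\omega\cap M_\gamma$ increase), pick $x\in 2^\omega\setminus M_\beta$, fix a recursive bijection $e:2^{<\omega}\to\omega$, and set $A_\alpha=\{p_{e(x\restriction n)}\mid n<\omega\}\subseteq P$. If $Y$ is infinite and $Y\subseteq^* A_\alpha$, then $x$ is recovered arithmetically from $Y\oplus P$: all but finitely many elements of $Y$ equal $p_{e(s)}$ for $s$ an initial segment of $x$, and $x$ is the union of those decoded strings having infinitely many extensions among the decoded strings. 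Hence any such $Y$ lying in $M_\beta$ would put $x\in M_\beta$, so no infinite $Y\in M_\beta$ (in particular none in $M_\alpha$) is $\subseteq^* A_\alpha$; and since every pseudo-intersection of the full tower is a real, hence lies in some $M_\gamma$, it is defeated at stage $\gamma$. With this step your argument goes through and in fact shows $\slice\implies\mathfrak{t}=\omega_1$ directly, something the paper obtains only later and indirectly via $\sym=\omega_1$; without it, what you have is a plan, not a proof.
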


In the proof we will utilize a known result from \cite{Q-set}. Recall that a set $A \subseteq \mathbb{R}$ is a \emph{$Q$-set}, if each subset of $A$ is a relative $F_\sigma$, i.e. for each $B\subseteq A$ there exists an $F_\sigma$ subset $F\subseteq \mathbb{R}$ such that $A\cap F=B$.

\begin{thm}[\cite{Q-set}]
	$\operatorname{MA}_{\omega_1}(\sigma\text{-centred})$ implies that each set of cardinality $\omega_1$ is a $Q$-set.
\end{thm}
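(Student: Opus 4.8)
The plan is to fix a set $A\subseteq 2^\omega$ with $|A|=\omega_1$ (working in Cantor space is harmless, since the $Q$-set property is a Borel-isomorphism invariant) and an arbitrary $B\subseteq A$, and to produce an $F_\sigma$ set $F$ with $A\cap F=B$; writing $C=A\setminus B$, this amounts to finding an $F_\sigma$ set containing $B$ and disjoint from $C$. The device that makes everything tractable is a convenient coding of $F_\sigma$ subsets of $2^\omega$ by subsets of $2^{<\omega}$: for $S\subseteq 2^{<\omega}$ put
$$F_S=\{x\in 2^\omega:\ \exists m\ \forall k\ge m\ \ x\restriction k\in S\},$$
which is visibly $F_\sigma$, being the increasing union over $m$ of the closed sets $\{x:\forall k\ge m\ x\restriction k\in S\}$. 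I will construct the desired $F$ as $F_S$ for a generic $S$, so that the two requirements become: every $b\in B$ has all of its sufficiently long initial segments in $S$, and every $c\in C$ has initial segments outside $S$ of arbitrarily large length.

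Next I would introduce the natural finite-approximation poset $\mathbb{P}=\mathbb{P}_{A,B}$ approximating such an $S$. A condition is a pair $p=(\sigma_p,\tau_p)$, where $\sigma_p\colon 2^{<\omega}\rightharpoonup 2$ is a finite partial function recording explicit membership decisions about $S$, and $\tau_p\colon B\rightharpoonup\omega$ is a finite partial function recording, for finitely many $b\in B$, a threshold, i.e.\ a promise that $b\restriction k\in S$ for every $k\ge\tau_p(b)$; the only requirement is the obvious consistency condition that no promise contradicts an explicit decision (if $b\in\dom\tau_p$, $k\ge\tau_p(b)$ and $b\restriction k\in\dom\sigma_p$, then $\sigma_p(b\restriction k)=1$). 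Extension refines $\sigma$, extends $\tau$ without changing old thresholds, and preserves consistency. Since all promises are positive, they never conflict with one another, so any two conditions with the same explicit part $\sigma_p$ are compatible (amalgamate the threshold functions, taking minima on the overlap); as there are only countably many finite functions $\sigma$, the poset $\mathbb{P}$ is $\sigma$-centred. The relevant $\omega_1$ dense sets are, for $b\in B$, the set $D_b$ of conditions with $b\in\dom\tau_p$ (to enter the promise, choose a threshold larger than the lengths of all currently-decided zeros lying below $b$, so no conflict arises), and, for $c\in C$ and $n\in\omega$, the set $D^c_n$ of conditions explicitly deciding $\sigma_p(c\restriction k)=0$ for some $k\ge n$ (possible because only finitely many initial segments of $c$ can be initial segments of promised branches, so a deep enough level is free to be vetoed).

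Finally, applying $\operatorname{MA}_{\omega_1}(\sigma\text{-centred})$ to these $\omega_1$ dense sets yields a filter $G$ meeting all of them; I let $S$ consist of every string that is either explicitly placed in $S$ by some condition in $G$ or promised by some condition in $G$, and I take $F=F_S$. Genericity through the $D_b$ puts every $b\in B$ eventually into $S$, so $B\subseteq F$, while genericity through the $D^c_n$, together with the fact that a string explicitly set to $0$ by a condition in $G$ can never be promoted (a filter cannot contain both an explicit $0$ and a promise of $1$ for the same string), keeps every $c\in C$ out of $S$ cofinally, so $C\cap F=\varnothing$; hence $A\cap F=B$. I expect the main obstacle to be exactly the choice of coding and the matching bookkeeping in the poset: the naive attempt to build $F$ as a growing union of clopen sets runs into the problem that a $C$-point may be irrevocably swallowed while one is busy capturing a nearby $B$-point, and it is precisely the eventual-membership coding, with purely positive promises for $B$ and the freedom to veto arbitrarily deep levels for $C$, that removes this tension and simultaneously delivers $\sigma$-centredness.
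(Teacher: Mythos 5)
The paper does not prove this statement at all: it is quoted verbatim from Fleissner--Miller \cite{Q-set} and used as a black box in the proof of Theorem \ref{martinsaxiom}, so there is no internal proof to compare against. Judged on its own, your argument is correct and is essentially the classical one: your poset is a tree-indexed form of the Solovay/Mathias almost-disjoint coding forcing. Indeed, identifying each $x\in 2^\omega$ with the almost disjoint family member $A_x=\{x\restriction k:\ k<\omega\}\subseteq 2^{<\omega}$, your generic $S$ is exactly a set with $A_b\subseteq^* S$ for $b\in B$ and $A_c\cap S$ ``co-infinite along the branch'' for $c\in C$, and $F_S$ is the standard $F_\sigma$ decoding; the density computations and the $\sigma$-centredness via the explicit part $\sigma_p$ are the standard ones and go through.

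Two repairable slips deserve attention. First, your extension relation as written (``extends $\tau$ without changing old thresholds'') contradicts your amalgamation (``taking minima on the overlap''): if $\tau_p(b)=5$ and $\tau_q(b)=3$, no condition literally extends both threshold functions, so under the stated order the cell $\{p:\sigma_p=\sigma\}$ is \emph{not} centred. The fix is immediate and is what you evidently intend: either allow extensions to lower thresholds (a lower threshold is a stronger promise, and consistency of the min-amalgam follows from consistency of the condition with the smaller threshold, since both share $\sigma$), or take $\tau_p$ to be a finite \emph{set} of pairs $(b,m)$ rather than a function, so that unions amalgamate. With either repair, the contradiction argument at the end (a common extension of an explicit $0$ at $c\restriction k=b\restriction k$ and a promise $(b,m)$ with $m\le k$ violates consistency) works verbatim. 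Second, your opening reduction is too glib: the $Q$-set property is \emph{not} obviously a Borel-isomorphism invariant, since Borel isomorphisms need not preserve the class $F_\sigma$. The transfer from $2^\omega$ to $\mathbb{R}$ should instead be done by a homeomorphism argument: run the identical poset over $\omega^{<\omega}$ for $A\cap(\mathbb{R}\setminus\mathbb{Q})$ via the homeomorphism of the irrationals with $\omega^\omega$ (homeomorphisms do preserve relative $F_\sigma$ sets), and absorb the countably many rational points of $A$ into the $F_\sigma$ set by hand; alternatively, run your poset directly with conditions referring to a countable base of $\mathbb{R}$. Neither point affects the substance of the proof.
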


\begin{proof}[Proof of Theorem \ref{martinsaxiom}]
	 Assume that $\operatorname{MA}_{\omega_1}(\text{$\sigma$-centred})$ holds, and $(M_\alpha)_{\alpha<\omega_1}$ is a sequence of models witnessing $\slice$. $M_0 \models ``\text{$2^\omega$ is uncountable}"$, so there exists a sequence of pairwise distinct reals $X=\{x_\alpha|\; \alpha<\omega_1\}\in M_0$ (note that this sequence is \emph{really} of the length $\omega_1$). Let $f:\omega_1 \hookrightarrow 2^\omega$ be a function such that $\forall \alpha<\omega_1\; f(\alpha) \notin M_\alpha$. We will obtain a contradiction, by showing that there exists some $\eta <\omega_1$, for which $\rg(f)\subseteq M_\eta$.
	\par For every natural number $m$, let $A_m=\{x_\alpha|\;f(\alpha)(m)=1 \}=X\cap F_m$, where $F_m$ is an $F_\sigma$ subset of reals. Since the sequence $(F_m)_{m<\omega}$ can be coded by a real, clearly it belongs to some model $M_\eta$. It is enough to show that using this sequence and $X$ we can give a definition of $\rg(f)$. But 
	\begin{equation*}\rg(f)=\{x\in 2^\omega|\;  \exists \alpha<\omega_1\; \forall m<\omega\quad x_\alpha \in F_m \iff x(m)=1\}. \hfill \qedhere \end{equation*} 
\end{proof}

It is compatible with any value of $2^\omega$ that $\slice$ holds.

\begin{prop} \label{prodprop}
	Let $\mathbb{P}$ be any finite support product of productively c.c.c. forcings adding reals, and of the length at least $\kappa$, where $\kappa$ is regular. Then $\mathbb{P} \Vdash \slicekappa$.
\end{prop}

\begin{proof}
	The forcing $\mathbb{P}$ under discussion is of the form
	$$\mathbb{P}=\prod_{i \in I\times \kappa} \mathbb{P}_i,$$
	where each $\mathbb{P}_i$ adds some real number, and $|I|\ge \kappa$. For each $\alpha < \kappa$, the product 
	$\displaystyle{\prod_{i \in I\times \alpha} \mathbb{P}_i}$ can be identified with a complete suborder of $\mathbb{P}$.	If $G\subseteq \mathbb{P}$ is generic over some model $V$, then $\slicekappa$ is witnessed by the sequence 
	\begin{equation*}
		M_\alpha=V[G\cap \prod_{i \in I\times \alpha} \mathbb{P}_i].\qedhere
	\end{equation*}
\end{proof}
\begin{cor}\label{cohen}
	For each uncountable, regular $\kappa$, $\mathbb{C}_\kappa \Vdash \slicekappa$.
\end{cor}

Recall that a set of reals is called \emph{$\omega_1$-dense}, if each nonempty open interval in this set has size $\omega_1$. The following was proved by Baumgartner in \cite{baum}. 

\begin{thm}[\cite{baum}]
	It is consistent with $\ma_{\omega_1}$, that all $\omega_1$-dense subsets of reals are order-isomorphic. In particular, each $\omega_1$-dense set of reals has a non-trivial order-automorphism.
\end{thm}

The natural question whether this assertion follows from $\ma_{\omega_1}$ was resolved by Avraham and the second author in \cite{a-s}.

\begin{thm}[\cite{a-s}]
	It is consistent with $\ma_{\omega_1}$, that there exists a rigid $\omega_1$-dense real order type.
\end{thm}

This is also an easy consequence of $\slice$.

\begin{thm}
	$\slice$ implies that there is an $\omega_1$-dense rigid subset of the real line.
\end{thm}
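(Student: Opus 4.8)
The plan is to use the stratification of the reals supplied by $\slice$ as a rigidity mechanism. Fix a witnessing sequence $(M_\alpha)_{\alpha<\omega_1}$ and, for a real $x$, define its \emph{level} $\rho(x)=\min\{\alpha<\omega_1 : x\in M_\alpha\}$. This is well defined and $<\omega_1$ since $2^\omega=\bigcup_{\alpha<\omega_1}2^\omega\cap M_\alpha$, and because the models increase, $\{\alpha : x\in M_\alpha\}$ is a tail of $\omega_1$. The goal is to build an $\omega_1$-dense set $X=\{x_\alpha : \alpha<\omega_1\}\subseteq\mathbb{R}$ on which $\rho$ is \emph{injective}, i.e.\ distinct points of $X$ receive distinct levels. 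Rigidity will then come almost for free, because an order-automorphism of $X$ is coded by a single real and hence cannot move a point to a strictly different level once that level exceeds the automorphism's own level.

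First I would verify that new reals appear densely at arbitrarily large levels. For each rational interval $I$ fix, once and for all, a homeomorphism $\phi_I:2^\omega\to I\cap(\mathbb{R}\setminus\mathbb{Q})$ whose code lies in $M_0$. Using the strict-increase clause $2^\omega\cap M_\gamma\subsetneq 2^\omega\cap M_{\gamma+1}$, pick $r\in M_{\gamma+1}\setminus M_\gamma$; then $\phi_I(r)\in I$ has level exactly $\gamma+1$, since otherwise $\phi_I^{-1}\in M_0\subseteq M_\gamma$ would drag $r$ down into $M_\gamma$. Thus every rational interval meets every successor level. Now I would build $X$ by recursion of length $\omega_1$ against a bookkeeping that visits each rational interval $\omega_1$ times: at stage $\alpha$ set $\delta_\alpha=\sup\{\rho(x_\beta)+1 : \beta<\alpha\}$, a countable ordinal, and choose $x_\alpha$ in the prescribed interval at some level $>\delta_\alpha$. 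This keeps the levels strictly increasing along the construction (hence injective), while the bookkeeping delivers $\omega_1$-density.

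For rigidity, suppose $h:X\to X$ is a non-identity order-automorphism. Since $X$ and $h[X]=X$ are dense, $h$ extends (in the standard way) to a continuous strictly increasing $\bar h:\mathbb{R}\to\mathbb{R}$ with $\bar h\restriction X=h$; as $\bar h$ and $\bar h^{-1}$ are determined by their restrictions to $\mathbb{Q}$, they are coded by reals and so lie in a common model $M_\eta$ with $\eta<\omega_1$. The key computation is: if $\rho(x)>\eta$, then evaluating the code of $\bar h$ inside $M_{\rho(x)}$ is legitimate and absolute (the evaluation of a Borel function at a real is absolute between transitive models, in the spirit of Proposition \ref{aswellas} and the paper's coding convention), so $h(x)=\bar h(x)\in M_{\rho(x)}$ and $\rho(h(x))\le\rho(x)$; the symmetric argument with $\bar h^{-1}$ yields $\rho(h(x))=\rho(x)$. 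Injectivity of $\rho\restriction X$ then forces $h(x)=x$ whenever $\rho(x)>\eta$. Since only countably many levels are $\le\eta$, the set of fixed points of $h$ is all but countably many points of $X$, hence dense in $\mathbb{R}$; an order-preserving self-map fixing a dense subset must be the identity (if $h(x)>x$, a fixed point in the interval $(x,h(x))$ gives a contradiction), so $h=\mathrm{id}$, a contradiction.

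I expect the main obstacle to be the coordination of $\omega_1$-density with the injective-level demand: the whole scheme relies on new reals being available densely and cofinally in the level hierarchy, which is precisely what the fixed homeomorphisms $\phi_I$ together with the strict-increase axiom of $\slice$ provide. The rigidity half is comparatively robust; its only delicate points are producing the real-coded continuous extension $\bar h$ and justifying the absoluteness of evaluating $\bar h$ inside $M_{\rho(x)}$. Once those are in hand, the level bookkeeping does all the remaining work.
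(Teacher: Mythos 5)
Your proof is correct and is essentially the paper's own argument: both construct $X$ by a length-$\omega_1$ recursion choosing points of strictly increasing level (the paper takes $x_\alpha\in M_\alpha\setminus\bigcup_{\beta<\alpha}M_\beta$, which makes the level map injective automatically) while bookkeeping intervals for $\omega_1$-density, and both derive rigidity from the fact that an order-automorphism extends to a continuous map coded by a real lying in some $M_\eta$, which by absoluteness of evaluation cannot move points of level above $\eta$, so the map fixes a dense set and is the identity. One cosmetic repair: $2^\omega$ is compact, so there is no homeomorphism $\phi_I:2^\omega\to I\cap(\mathbb{R}\setminus\mathbb{Q})$; replace it by a homeomorphism of $\baire$ onto $I\cap(\mathbb{R}\setminus\mathbb{Q})$, or any $M_0$-coded Borel injection of $2^\omega$ into $I$ with Borel inverse, and your argument goes through unchanged.
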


\begin{proof}
	Let $(M_\alpha)_{\alpha<\omega_1}$ be a sequence witnessing $\slice$. For each $\alpha$, we choose $$x_\alpha \in \mathbb{R} \cap (M_\alpha \setminus \bigcup_{\beta<\alpha}M_\beta).$$
	We can easily arrange the construction, so that we hit each open interval $\omega_1$-many times. The set $X=\{x_\alpha|\;\alpha<\omega_1\}$ is $\omega_1$-dense, and it remains to prove, that it is also rigid. Suppose that $f:X\rightarrow X$ is an order isomorphism. $f$ extends uniquely to a continuous function $f':\mathbb{R} \rightarrow \mathbb{R}$, and each such function can be coded by a real number. Therefore there is some $\eta<\omega_1$, such that $f' \in M_\eta$. Now, for any $\xi>\eta$, it is not possible that $f(x_\eta)=x_\xi$, because it would mean $x_\xi \in M_\eta$, contrary to the choice of $x_\xi$. But, likewise, it is not possible that $f^{-1}(x_\eta)=x_\xi$. The conclusion is that for all $\xi>\eta$, $f(x_\xi)=x_\xi$. But this means that $f$ is identity on a dense set, and therefore everywhere.$\qedhere$
\end{proof}

\section{Slicing the real line while preserving $\ma$(Suslin)}

We are going to show that $\slice$ is consistent with a version of Martin's Axiom which takes into account only partial orders representable as analytic sets (see \cite{b-j}, Ch. 3.6, or \cite{292}).


\begin{defin} \label{Suslindef}
	A partial order $(\mathbb{P},\le)$ has a \emph{Suslin definition} if $\mathbb{P} \in \anal(\baire)$, and both ordering and incompatibility relations in $\mathbb{P}$ are analytic relations on $\baire$. $\mathbb{P}$ is \emph{Suslin} if it has a Suslin definition and is c.c.c.
\end{defin}

The rest of the Section is devoted to the proof of the next theorem.

\begin{thm} \label{mainthm}
	If $\kappa$ is a regular cardinal such that $\kappa^\omega=\kappa$, then the following theory is consistent 
	$$\zfc + \ma(\text{Suslin}) + \slice + ``2^\omega=\kappa".$$
\end{thm}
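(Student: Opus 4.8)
The plan is to realize the model by a finite-support iteration of Suslin forcings of length $\kappa$, arranged by a bookkeeping so that $MA(\text{Suslin})$ holds at the final stage, while the index set $\kappa$ is simultaneously carved into an increasing $\omega_1$-chain of ``self-contained'' pieces that furnish the slicing sequence. I would build $\mathbb{P}_\kappa=\{\mathbb{P}_\beta\ast\dot{\mathbb{Q}}_\beta|\; \beta<\kappa\}$, where each $\dot{\mathbb{Q}}_\beta$ is forced to be a Suslin forcing in the sense of Definition \ref{Suslindef}, coded by a real in $V^{\mathbb{P}_\beta}$. Since each iterand is c.c.c., the whole iteration is c.c.c.; the regularity of $\kappa$ together with $\kappa^{\omega}=\kappa$ gives $|\mathbb{P}_\kappa|=\kappa$ and, by the usual nice-name counting, $2^{\omega}=\kappa$ in the extension, so no extra ground-model assumption is needed. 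The standard bookkeeping — hitting every Suslin code cofinally often, so that any family of fewer than $\kappa$ dense sets, which by regularity of $\kappa$ and c.c.c. already appears at some stage $\gamma<\kappa$, is met by a generic filter added at a later stage where that forcing is reintroduced — yields $MA(\text{Suslin})$, once one invokes the preservation theorem guaranteeing that a forcing which is Suslin (analytic and c.c.c.) in an intermediate model remains c.c.c. in the larger one (see \cite{b-j}, \cite{292}).

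The decomposition giving $\slice$ is obtained as follows. Fix a partition $\kappa=\bigsqcup_{\alpha<\omega_1}T_\alpha$ into pieces each of size $\kappa$, arrange that every $T_\alpha$ contains a coordinate forced to add a new real (e.g.\ a Cohen real, which is Suslin), and let $A_\alpha$ be the closure of $\bigcup_{\beta\le\alpha}T_\beta$ under the countable dependencies of the Suslin codes: whenever $\gamma\in A_\alpha$, the real name $\dot c_\gamma$ coding $\dot{\mathbb{Q}}_\gamma$ has, by c.c.c.\ and finite support, countable support below $\gamma$, and we throw that support into $A_\alpha$. Closing a set of size $\kappa$ under countably many dependencies keeps it of size $\kappa$, so the $A_\alpha$ form a strictly increasing chain of such closed sets with $\bigcup_{\alpha<\omega_1}A_\alpha=\kappa$. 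Put $M_\alpha=V[G\restriction A_\alpha]$. Each $M_\alpha$ is a transitive class model of $ZFC$ with $\omega_1^{M_\alpha}=\omega_1$, since $\mathbb{P}\restriction A_\alpha$ is c.c.c. The covering property holds because every real in $V[G]$ has, by c.c.c.\ and finite support, a countable support in $\kappa$; as $\omega_1$ is regular, this countable set of appearance ordinals is bounded below $\omega_1$, hence contained in some $A_\alpha$, so the real lies in $M_\alpha$. The same argument applied to elements of $2^{\delta}$ for $\delta<\omega_1$ (coded by reals) gives $2^{\delta}=\bigcup_{\alpha<\omega_1}2^{\delta}\cap M_\alpha$, and the real-adding coordinate in each $T_\beta$ makes the chain $2^{\delta}\cap M_\alpha$ strictly increasing.

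The heart of the argument — and the step I expect to be the main obstacle — is to show that for each code-closed $A\subseteq\kappa$ one has $\mathbb{P}\restriction A\lessdot\mathbb{P}_\kappa$, with induced generic exactly $G\restriction A$, so that $M_\alpha$ genuinely computes its reals from the coordinates in $A_\alpha$. This is where restricting to \emph{Suslin} iterands is indispensable: in a general finite-support iteration the iterand at stage $\gamma$ depends on the full generic below $\gamma$, and no product-like decomposition is available. For Suslin iterands the ordering and incompatibility relations are analytic, hence absolute between transitive models, so that by Proposition \ref{aswellas} the forcing is decided the same way in the inner model; once $A$ is closed under code-dependencies, the code $\dot c_\gamma$ for $\gamma\in A$ lives in $V[G\restriction(A\cap\gamma)]$, and $\mathbb{Q}_\gamma\cap V[G\restriction(A\cap\gamma)]$ is a complete suborder of the full $\mathbb{Q}_\gamma$, because a countable (c.c.c.) maximal antichain of the inner forcing remains maximal in the larger one by the absoluteness of the analytic incompatibility relation. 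Threading this through the iteration — the Suslin analogue of Proposition \ref{prodprop} — gives $\mathbb{P}\restriction A\lessdot\mathbb{P}_\kappa$ with induced generic $G\restriction A$.

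Assembling the pieces, the iteration is c.c.c.\ so $\omega_1$ and all cardinals are preserved; the length $\kappa$ with $\kappa^{\omega}=\kappa$ forces $2^{\omega}=\kappa$; the bookkeeping forces $MA(\text{Suslin})$; and $\{M_\alpha|\;\alpha<\omega_1\}$ witnesses $\slice$. There is no tension with Theorem \ref{martinsaxiom}, since the $Q$-set argument used there needs $MA_{\omega_1}$ for all $\sigma$-centred forcings, and the forcing realizing a $Q$-set need not be Suslin; restricting Martin's Axiom to the analytically definable c.c.c.\ forcings is precisely the weakening that coexists with a slicing sequence.
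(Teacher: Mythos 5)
Your route to the slicing sequence is genuinely different from the paper's, so let me first record the contrast. The paper never carves up the coordinate set of the iteration: it starts from a ground model that \emph{already} satisfies $\slice$, strengthened so that the models $M_\alpha$ slice $\kappa^\omega$ and not just $\baire$ (such a model is produced by adding $\kappa$ Cohen reals over a model of $CH$), and it relativizes the iteration to \emph{names}: $\mathbb{P}^{M_\alpha}_\kappa$ consists of the simple conditions built from parameters lying in the ground-model class $M_\alpha$, and the slicing models of the extension are $V[G\cap\mathbb{P}^{M_\alpha}_\kappa]$. Covering then follows from Lemma \ref{definable} (every simple name is definable from an element of $\kappa^\omega$, hence from a parameter in some $M_\alpha$), and non-triviality from the coding device $\mathbb{C}_r$, which converts ``$r\notin M_\alpha$'' into ``the Suslin code of this copy of Cohen forcing is not in $M_\alpha$'', so that the relativized iteration is trivial at that stage. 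Your coordinate-restricted suborders $\mathbb{P}\restriction A_\alpha$ are meant to replace both devices at once, with no hypothesis on the ground model; what the two approaches share is the central lemma, which you identify correctly: a restricted suborder is complete in the full iteration because a countable maximal antichain of a Suslin forcing computed in a small transitive model stays maximal in a larger one, by $\coanal$-absoluteness (the paper's Lemma \ref{iterationlemma} and the induction around it).

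The genuine gap is in your construction of the sets $A_\alpha$. You close $\bigcup_{\beta\le\alpha}T_\beta$ under the countable hereditary supports of the codes $\dot c_\gamma$ and conclude ``so the $A_\alpha$ form a strictly increasing chain'' of which each term omits a real-adding coordinate of every later block. This does not follow. The codes are handed out by a bookkeeping that must use \emph{every} name for a real, with arbitrary countable hereditary support, cofinally often; since $\kappa$ many countable sets can cover $\kappa$, nothing prevents the closure $A_0$ of $T_0$ alone from being all of $\kappa$, in which case $M_0=V[G]$ and the decomposition degenerates. The observation that the closure ``keeps size $\kappa$'' gives nothing, because $\kappa$ itself has size $\kappa$; and even properness of $A_\alpha$ is not what you need --- you need a coordinate \emph{outside} $A_\alpha$ whose generic real is new over $V[G\restriction A_\alpha]$, and your designated Cohen coordinate in $T_{\alpha+1}$ may perfectly well be swallowed by the closure $A_\alpha$. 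The repair is to coordinate the bookkeeping with the partition in advance: give each coordinate $\gamma\in T_\nu$ the level $\nu$, give each name the level $\sup$ of the levels of the coordinates in its hereditary support (a countable supremum, hence $<\omega_1$ by regularity), and allow a name to be used as a code only at coordinates of level at least its own. Then $A_\alpha=\bigcup_{\nu\le\alpha}T_\nu$ is literally closed, proper, and disjoint from $T_{\alpha+1}$, while every name is still used cofinally often, since each $T_\nu$ can be taken of size $\kappa$ and cofinal in $\kappa$. This coordination is precisely the ingredient that the paper's ground-model hypothesis together with Lemma \ref{definable} substitutes for, and without it (or some equivalent) your covering and strictness paragraphs have nothing to stand on. Finally, even after the repair, strictness still requires showing that the Cohen forcing at a coordinate outside $A_\alpha$ embeds completely into the quotient $\mathbb{P}_\kappa/(G\restriction A_\alpha)$; you assert this in one clause, whereas it is exactly the last step that the paper has to argue.
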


Let $\psi(-,-,-,-)$ be a universal analytic formula, i.e. a $\Sigma_1^1$ formula with the property that for each analytic set $P\subseteq \baire \times \baire \times \baire$ there exists $r \in \baire$ such that
	$$P=\{x \in \omega^\omega\times\omega^\omega\times\omega^\omega|\; \psi(x,r)\}.$$
We want to use $\psi$ to add generic filters to all possible Suslin forcings. We will say that $\psi(-,-,-,\dot{r}_\alpha)$ \emph{defines} $\dot{\mathbb{Q}}_\alpha$ if $\dot{r}_\alpha$ is a $\mathbb{P}_\alpha$-name for a real and $\mathbb{P}_\alpha$ forces each of the following

$$\dot{\mathbb{Q}}_\alpha \text{ is a separative partial order with the greatest element } 0,$$
$$\psi(x,1,1,\dot{r}_\alpha)\iff x \in \dot{\mathbb{Q}}_\alpha,$$
$$\psi(x,y,2,\dot{r}_\alpha)\iff x \le_{\dot{\mathbb{Q}}_\alpha} y,$$
$$\psi(x,y,3,\dot{r}_\alpha)\iff x \bot_{\dot{\mathbb{Q}}_\alpha} y.$$

We will write $\psi^{\in}(x,z)$ for $\psi(x,1,1,z)$, $\psi^{\bot}(x,y,z)$ for $\psi(x,y,3,z)$, and $\psi^{\le}(x,y,z)$ for $\psi(x,y,2,z)$.

We are going to iterate all Suslin forcings, each of them cofinally many times. More precisely, we define by induction a finite support iteration $\{\mathbb{P}_\alpha\ast \dot{\mathbb{Q}}_\alpha|\; \alpha<\kappa\}$:

\begin{itemize}
	\item $\mathbb{P}_0=\{0\}$,
	\item $\mathbb{P}_\alpha \Vdash ``\dot{\mathbb{Q}}_\alpha=\{x \in \baire|\;\psi(x,\dot{r}_\alpha) \} \text{ if this formula defines a Suslin forcing;\; \\
	else } \dot{\mathbb{Q}}=\{0\}"$,
\end{itemize}

The variable $\dot{r}_\alpha$ ranges over all reals, and all possible names for reals, each of them cofinally many times. In order to iterate through all possible parameters using a suitable bookkeeping, we introduce the class of \emph{simple} conditions, following \cite{b-j}.

\begin{defin} By the induction on $\alpha$, we define \emph{simple} conditions in $\mathbb{P}_\alpha$.
	\begin{itemize}
		\item $\alpha=0.$ $\mathbb{P}_0=\{0\}$, and we declare $0$ to be simple.
		\item $\alpha+1.$ $(p,\dot{q})\in \mathbb{P}_{\alpha+1}$ is simple if $p\in \mathbb{P}_\alpha$ is simple and 
		$$\dot{q}=\{(m,n,p^m_n)|\; m,n < \omega,\, p^m_n \in \mathbb{P}_\alpha\},$$
		 where each $p^m_n$ is a simple condition in $\mathbb{P}_\alpha$. (For each $m\in \omega$, the set $\{p^m_n|\; n<\omega\}$ is a maximal antichain deciding $\dot{q}(m)$, i.e. $p^m_n \Vdash \dot{q}(m)=n$.)
		\item $\lim{\alpha}.$ $p \in \mathbb{P}_\alpha$ is simple if for each $\beta<\alpha$, $p \restriction \beta \in \mathbb{P}_\beta$ is simple.
		\end{itemize}
\end{defin}

It is straightforward to check by induction, that the set of simple $\mathbb{P}_\alpha$-conditions is dense in $\mathbb{P}_\alpha$, and that each $\mathbb{P}_\alpha$ has at most $\kappa$ many names for reals (if we restrict to names with simple conditions). We declare the forcings $\mathbb{P}_\alpha$ to consist only of simple conditions, so formally we write
$$\mathbb{P}_{\alpha+1}=\{ (p,\dot{q}) \in \mathbb{P}_\alpha \ast \dot{\mathbb{Q}}_\alpha|\; (p,\dot{q}) \text{ is simple}\}.$$

\begin{prop} \label{MAproof}
	If $\kappa$ is an uncountable regular cardinal such that $\kappa^\omega=\kappa$, then 
	$$\mathbb{P}_\kappa \Vdash \ma(\text{Suslin}) + ``2^\omega=\kappa".$$
\end{prop}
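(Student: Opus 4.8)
The plan is to run the Solovay--Tennenbaum consistency argument for $MA$, specialized to the class of Suslin forcings. Two features of this class make the argument go through smoothly: a Suslin forcing is completely determined by a single real parameter, so there is no need to reflect the poset itself, only its parameter; and the relevant properties --- being a separative partial order, the incompatibility relation, and c.c.c.-ness --- are absolute between the intermediate models $V^{\mathbb{P}_\alpha}$ (by Shoenfield's theorem and Proposition~\ref{aswellas}). Concretely I would verify three things: that $\mathbb{P}_\kappa$ is c.c.c.\ of size $\le\kappa$, so that cardinals are preserved and $2^\omega\le\kappa$; that reals are added cofinally, so that $2^\omega\ge\kappa$; and that $MA(\text{Suslin})$ holds, by a capturing-plus-bookkeeping argument.

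For the first point, note that by construction $\mathbb{P}_\alpha\Vdash``\dot{\mathbb{Q}}_\alpha\text{ is c.c.c.}"$ (it is either a poset we have checked to be c.c.c.\ in $V^{\mathbb{P}_\alpha}$, or trivial), so the usual $\Delta$-system argument shows that the finite-support iteration $\mathbb{P}_\kappa$ is c.c.c.\ (\cite{kunen}). Using the simple conditions together with $\kappa^\omega=\kappa$, an induction on $\alpha$ gives $|\mathbb{P}_\alpha|\le\kappa$, whence $|\mathbb{P}_\kappa|\le\kappa$. Being c.c.c., $\mathbb{P}_\kappa$ preserves cardinals and cofinalities, and counting nice names for reals (each a countable set of conditions) bounds $2^\omega\le\kappa^\omega=\kappa$ in the extension. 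For the reverse inequality I would observe that Cohen forcing is a Suslin c.c.c.\ forcing whose parameter is present from the start, so the bookkeeping assigns it at cofinally many stages, adding $\kappa$ pairwise distinct reals; thus $2^\omega=\kappa$.

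The core is verifying $MA(\text{Suslin})$ in $V[G]=V^{\mathbb{P}_\kappa}$. Suppose $\mathbb{Q}=\{x:\psi^{\in}(x,r)\}$ is a Suslin c.c.c.\ forcing there with real parameter $r$, and let $\{D_i:i<\mu\}$, $\mu<\kappa$, be dense subsets of $\mathbb{Q}$. For each $i$ I would pick a maximal antichain $E_i\subseteq D_i$; by c.c.c.\ it is countable, and, $D_i$ being dense, $E_i$ is predense in $\mathbb{Q}$. Since $\kappa$ is regular and $\mathbb{P}_\kappa$ is c.c.c., the parameter $r$ together with the whole sequence $\langle E_i:i<\mu\rangle$ is captured by some intermediate model $V^{\mathbb{P}_\alpha}$, $\alpha<\kappa$: each real appears at a bounded stage, and a nice name for the sequence has support of size $\le\mu\cdot\omega<\kappa$. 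By absoluteness of the Suslin definition, $\mathbb{Q}$ remains c.c.c.\ in every $V^{\mathbb{P}_\beta}$, and each $E_i$ remains predense in $\mathbb{Q}$ computed in every such $V^{\mathbb{P}_\beta}$ ($\alpha\le\beta\le\kappa$), because $\psi^{\in}$ and $\psi^{\bot}$ are absolute (Proposition~\ref{aswellas}). Now the bookkeeping provides a stage $\alpha'\ge\alpha$ at which $\dot r_{\alpha'}$ evaluates to $r$ and hence $\dot{\mathbb{Q}}_{\alpha'}=\mathbb{Q}$; the filter $G_{\alpha'}\subseteq\mathbb{Q}$ generic over $V^{\mathbb{P}_{\alpha'}}$ added at that step meets every predense set of $\mathbb{Q}$ lying in $V^{\mathbb{P}_{\alpha'}}$, in particular every $E_i$, and therefore every $D_i$. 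As $G_{\alpha'}\in V[G]$ is a filter on $\mathbb{Q}$ meeting all the $D_i$, this finishes the verification.

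The step I expect to require the most care is the absoluteness package, i.e.\ making precise that c.c.c.-ness, predensity, and membership in $\mathbb{Q}$ transfer correctly between $V^{\mathbb{P}_\alpha}$ and $V^{\mathbb{P}_\kappa}$. The cleanest route is to record that, for a Suslin forcing, incompatibility is $\Delta^1_1$ (both $\bot$, via $\psi^{\bot}$, and compatibility, via $\exists z\,(\psi^{\le}(z,x,r)\wedge\psi^{\le}(z,y,r))$, are $\Sigma^1_1$ and complementary), hence Borel and absolute; downward transfer of predensity then follows immediately. For c.c.c.\ one uses that a Suslin forcing is c.c.c.\ iff it has no perfect antichain, a $\Pi^1_2$ statement, so that c.c.c.\ is outright absolute by Shoenfield. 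The other point to set up carefully is the bookkeeping: one must ensure every name for a real parameter is revisited cofinally often, so that the particular $r$ arising in the final model is genuinely handled at some $\alpha'\ge\alpha$; this is the standard device already fixed in the construction.
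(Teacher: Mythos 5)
Your proposal is correct and is essentially the paper's own argument: the paper likewise captures the parameter $r$ and the $<\kappa$ many (countable) maximal antichains in an intermediate model $W_\delta$, uses the bookkeeping to arrange $\mathbb{P}_\delta \Vdash \dot{r}_\delta = r$, invokes absoluteness of $\psi^{\in}$ and $\psi^{\bot}$ to see that these antichains remain maximal in $S^{W_\delta}$, and then takes the filter generated in $S^{W_\kappa}$ by the stage-$\delta$ generic. The differences are cosmetic: the paper works directly with maximal antichains rather than reducing from dense sets, and it leaves implicit both the counting argument for $``2^\omega=\kappa"$ and the transfer of c.c.c.-ness down to $W_\delta$ (for which only the easy direction --- uncountable antichains persist upward by $\Sigma^1_1$-absoluteness and preservation of $\omega_1$ --- is needed, so your $\Pi^1_2$ perfect-antichain characterization is stronger than required).
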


\begin{proof}
	Let us denote by $W_\alpha$ the corresponding extensions of $V$ by $\mathbb{P}_\alpha$. Let $(S,\le)$ be a Suslin forcing in $W_\kappa$. Assume $S$ is defined by the formula $\psi(-,r)$. We fix a family $\{A_\gamma|\; \gamma<\lambda\}$ of maximal antichains in $S$, where $\lambda<\kappa$. Of course the formula $\psi(-,r)$ defines different sets in different models of set theory, so following the common custom we will denote by $S^N$ the interpretation of $S$ in the model $N$, i.e.
	$$S^N=\{x\in \baire \cap N|\; N\models \psi^{\in}(x,r) \}.$$
	
	Notice that the family $\{A_\gamma|\; \gamma<\lambda\}$ is a function from $\lambda$ to $[\baire]^\omega$, and so is added in some intermediate step if the iteration. Let us fix an ordinal $\delta<\kappa$ such that $\{A_\gamma|\; \gamma<\lambda\} \in W_\delta$, and $\mathbb{P}_\delta \Vdash \dot{r}_\delta = r$. Now $\mathbb{P}_\delta \Vdash \dot{\mathbb{Q}}_\delta = S^{W_\delta}$, so $W_{\delta+1}$ contains a filter $G_0 \subseteq S^{W_\delta}$ intersecting all $A_\gamma$'s (note that by absoluteness of $\psi^{\in}$ and $\psi^{\bot}$, the sets $A_\gamma$ are maximal antichains in $S^{W^\delta}$). The filter generated by $G_0$ in $S^{W_\kappa}$ is the required generic filter.$\qedhere$
\end{proof}

\par If $N$ is a transitive class containing $\kappa$, we can define by induction the relativized iteration $\mathbb{P}^N_\kappa\subseteq \mathbb{P}_\kappa$, taking into account only names from $N$. 

\begin{itemize}
	\item $\mathbb{P}^N_0=\{0\}$,
	\item $\mathbb{P}^N_\alpha \Vdash``\dot{\mathbb{Q}}^N_\alpha=\{x \in \baire|\;\psi^\in(x,\dot{r}_\alpha) \} \text{ if this formula defines a Suslin forcing,}\\
		 \text{ $\dot{r}_\alpha \in N$, and $\dot{r}_\alpha$ is a $\mathbb{P}_\alpha^N$-name; else } \dot{\mathbb{Q}}_\alpha^N=\{0\}"$,
	\item $\mathbb{P}^N_{\alpha+1}=\mathbb{P}^N_\alpha\ast \dot{\mathbb{Q}}^N_\alpha$.
	
\end{itemize}

We take direct limits in the limit step, so that $\mathbb{P}_\alpha^N$ is really a subset of $\mathbb{P}_\alpha$. Note, that we do not define names $\dot{r}_\alpha$ inductively along the way, since they have already been defined in the construction of $\mathbb{P}_\kappa$, which we take as granted. This construction is inspired by the lemmas 1.4 and 1.5 from \cite{292}, and conceptually is very similar. In order for it to work as desired, we prove by induction some properties of $\mathbb{P}_\alpha^N$.

\begin{lem} \label{iterationlemma}
	If $G\subseteq \mathbb{P}_\alpha$ is generic over $V$, and $H\subseteq \dot{\mathbb{Q}}_\alpha[G]$ is generic over $V[G]$, then $H\cap \dot{\mathbb{Q}}^N_\alpha[G]\subseteq \dot{\mathbb{Q}}^N_\alpha[G]$ is generic over $V[G\cap \mathbb{P}_\alpha^N]$.
\end{lem}
\begin{proof}
	Fix a maximal antichain $\mathcal{A} \subseteq \dot{\mathbb{Q}}^N_\alpha[G]=\dot{\mathbb{Q}}^N_\alpha[G']$, belonging to $V[G']$. As $\mathcal{A}$ is a countable set of reals, it can be coded using a single real $z \in \baire$. Recall that $\dot{\mathbb{Q}}^N_\alpha[G']$ is defined in $V[G']$ by the formula $\psi$ with the parameter $\dot{r}_\alpha[G']=\dot{r}_\alpha[G]$ (unless it is a trivial forcing). It is standard to check, that there exists a $\coanal$ formula $\phi(-,-)$ such that whenever $\psi(-,-,-,y)$ defines a Suslin forcing, 
	\begin{center}
		$\phi(x,y) \, \iff\, ``x$ is a real coding a maximal antichain in the partial ordering defined by the formula $\psi(-,-,-,y)"$.
	\end{center}
	Now
	$$V[G']\models \phi(z,\dot{r}_\alpha[G']),$$
	and so by absoluteness
	$$V[G]\models \phi(z,\dot{r}_\alpha[G]).$$
	But $\psi(-,\dot{r}_\alpha[G])$ is the formula defining $\dot{\mathbb{Q}}_\alpha[G]$ in $V[G]$. Therefore $\mathcal{A}$ remains maximal in $\dot{\mathbb{Q}}_\alpha[G]$, and the conclusion of the Lemma easily follows.$\qedhere$
\end{proof}

\begin{thm}\label{bigthm} If $N$ is a transitive class containing $\kappa$, then for all $\alpha\le \kappa$ $$\mathbb{P}^N_\alpha \lessdot \mathbb{P}_\alpha.$$ Specifically:
	
\begin{enumerate}
	\item If $p_0 \bot p_1$ in $\mathbb{P}_\alpha^N$, then $p_0 \bot p_1$ in $\mathbb{P}_\alpha$.
	\item If $p_0 \le p_1$ in $\mathbb{P}_\alpha^N$, then $p_0 \le p_1$ in $\mathbb{P}_\alpha$.
	\item If $\mathcal{A}\subseteq \mathbb{P}^N_\alpha$ is a maximal antichain, then $\mathcal{A}$ is maximal in $\mathbb{P}_\alpha$.
\end{enumerate}
\end{thm}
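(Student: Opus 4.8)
The plan is to argue by induction on $\alpha\le\kappa$, establishing the three clauses simultaneously, since together they are exactly the requirements for the inclusion $\mathbb{P}^N_\alpha\subseteq\mathbb{P}_\alpha$ to be a complete embedding. The base case $\alpha=0$ is trivial. Throughout, for a generic $G$ I write $W_\alpha=V[G\cap\mathbb{P}_\alpha]$ and $W^N_\alpha=V[G\cap\mathbb{P}^N_\alpha]$; by the inductive hypothesis $\mathbb{P}^N_\alpha\lessdot\mathbb{P}_\alpha$, so $G\cap\mathbb{P}^N_\alpha$ is $\mathbb{P}^N_\alpha$-generic (Proposition \ref{observation}) and $W^N_\alpha\subseteq W_\alpha$ are transitive models of $ZFC$ which, being set-generic extensions of $V$, have the same ordinals.

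For the successor step I reduce everything to a single \emph{fibre claim}: $\mathbb{P}^N_\alpha$ forces that $\dot{\mathbb{Q}}^N_\alpha$ is a complete suborder of $\dot{\mathbb{Q}}_\alpha$. Granting this, a standard two-step iteration lemma (see \cite{kunen}), together with the inductive hypothesis, yields $\mathbb{P}^N_{\alpha+1}\lessdot\mathbb{P}_{\alpha+1}$ and in particular clauses (1)--(3) at $\alpha+1$; indeed clauses (1) and (2) unwind, via Proposition \ref{aswellas} applied to the \anal{} relations $\psi^{\le}$ and $\psi^{\bot}$, to the statement that the fibre orderings and incompatibility relations agree, which is immediate from absoluteness. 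If $\dot r_\alpha\notin N$ or is not a $\mathbb{P}^N_\alpha$-name, then $\dot{\mathbb{Q}}^N_\alpha=\{0\}$ is trivial and the fibre claim is immediate, so assume otherwise. Since separativity, possession of a greatest element, and c.c.c.-ness of a Suslin-definable order are all absolute between $W^N_\alpha$ and $W_\alpha$ (see \cite{b-j}, Ch.\ 3.6), the formula $\psi(-,\dot r_\alpha)$ defines a Suslin forcing $S$ in $W^N_\alpha$ iff it does in $W_\alpha$, so the two interpretations $S^{W^N_\alpha}$, $S^{W_\alpha}$ are genuinely the iterands $\dot{\mathbb{Q}}^N_\alpha$, $\dot{\mathbb{Q}}_\alpha$.

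The heart of the matter is verifying the three parts of the fibre claim in $W_\alpha$. First $S^{W^N_\alpha}\subseteq S^{W_\alpha}$, because $\psi^{\in}$ is \anal{} and hence absolute, and likewise the orderings and incompatibility relations agree on $S^{W^N_\alpha}$ by absoluteness of $\psi^{\le}$ and $\psi^{\bot}$. The only substantial point is that a maximal antichain $A$ of $S^{W^N_\alpha}$ stays maximal in $S^{W_\alpha}$. Here I use that $S$ is c.c.c.\ in $W^N_\alpha$, so $A=\{a_n\mid n<\omega\}$ is countable and coded by a single real; the assertion ``$A$ is a maximal antichain of $S$'' then reads
$$\Big(\forall m\neq n\;\psi^{\bot}(a_m,a_n,r)\Big)\wedge\Big(\forall x\;\big(\psi^{\in}(x,r)\rightarrow\exists n\;\lnot\psi^{\bot}(x,a_n,r)\big)\Big),$$
whose first conjunct is \anal{} and whose second is \coanal{} (a universal real quantifier over a countable union of \coanal{} matrices). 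Both are absolute between the transitive models $W^N_\alpha\subseteq W_\alpha$, so maximality transfers upward. I expect this to be the principal obstacle: one needs both the absoluteness of Suslin c.c.c.-ness (so that the iterands really coincide and antichains remain countable) and the fact that ``being a maximal antichain'' of a Suslin forcing is \coanal{} in a real coding the antichain, whence Mostowski absoluteness lifts maximality to the larger model.

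For a limit $\alpha$ the iterations are direct limits, so every condition has bounded finite support. Clauses (1) and (2) are decided below some $\beta<\alpha$ and follow from the inductive hypothesis together with $\mathbb{P}_\beta\lessdot\mathbb{P}_\alpha$ and $\mathbb{P}^N_\beta\lessdot\mathbb{P}^N_\alpha$. For clause (3) I argue with reductions, which are available at every stage $<\alpha$ by the inductive hypothesis: given $q\in\mathbb{P}_\alpha$ with support below $\beta$, a reduction $q^{*}\in\mathbb{P}^N_\beta$ of $q$ remains a reduction of $q$ into $\mathbb{P}^N_\alpha$, since any $\mathbb{P}^N_\alpha$-extension of $q^{*}$ lives with $q$ in some $\mathbb{P}_\gamma$, $\gamma<\alpha$, where compatibility with $q$ is guaranteed by the choice of $q^{*}$. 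Maximality of $\mathcal{A}$ in $\mathbb{P}^N_\alpha$ then makes $q^{*}$ compatible with some member of $\mathcal{A}$, and pulling the common extension back below $q$ shows that $\mathcal{A}$ is maximal in $\mathbb{P}_\alpha$, completing the induction.
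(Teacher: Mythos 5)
Your proposal is correct and takes essentially the same route as the paper: the same induction on $\alpha$ in which the successor case reduces to the cross-model fibre statement (the paper's Lemma \ref{iterationlemma}, fed into the $G\ast H$ two-step analysis that the paper carries out by hand), with the identical key point that a maximal antichain of $\dot{\mathbb{Q}}^N_\alpha[G']$ is countable by the c.c.c., hence coded by a real, and its maximality is $\coanal$ in that code and so transfers to the larger model by Mostowski absoluteness, while clauses (1) and (2) follow from Proposition \ref{aswellas} and $\Sigma^1_1$-absoluteness exactly as in the paper. The only deviations are cosmetic: you cite the standard two-step iteration lemma where the paper verifies it directly, at limit stages you prove clause (3) via reductions where the paper refines the restricted antichain $\{\overline{p}_n\restriction\gamma\mid n<\omega\}$ inside $\mathbb{P}^N_\gamma$ and applies the induction hypothesis (the two arguments are equivalent dual formulations of completeness), and your appeal to two-sided absoluteness of Suslin-c.c.c.-ness is stronger than what is actually used — only the easy downward direction is needed, and the paper glosses the very same coherence point when it assumes that $\dot{\mathbb{Q}}^N_\alpha$ being nontrivial entails $\dot{\mathbb{Q}}_\alpha$ is defined by the same formula.
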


\begin{proof}$\text{We proceed by induction on } \alpha.$
	\par{Concerning the point 1.}
	\begin{itemize}
		\item $\alpha=0$. Clear.
		\item $\alpha+1$. Fix two incompatible conditions $p_0,p_1 \in \mathbb{P}_{\alpha+1}^N$. Then $p_0=(p'_0,\dot{q}_0)$, and $p_1=(p'_1,\dot{q}_1)$, where $p'_0, p'_1 \in \mathbb{P}_\alpha^N$. If $p'_0 \bot p'_1$ in $\mathbb{P}_\alpha$, then clearly $p_0 \bot p_1$ in $\mathbb{P}_{\alpha+1}$, so let us assume otherwise, and fix $p\le p'_0,p'_1$ (in $\mathbb{P}_\alpha$). If there exists $p^* \le p$ such that $p^* \Vdash \dot{\mathbb{Q}}^N_\alpha=\{0\}$, then $(p^*,0)\le p_0,p_1$, contradicting $p_0 \bot p_1$. Therefore $p$ forces that $\dot{\mathbb{Q}}^N_\alpha$ is defined by $\psi^\in(-,\dot{r}_\alpha)$. The forcing relation used above is a relation from $\mathbb{P}^N_\alpha$, however since $\dot{r}_\alpha$, $\dot{q}_0$ and $\dot{q}_1$ are $\mathbb{P}^N_\alpha$-names, this is the same relation as coming from $\mathbb{P}_\alpha$ (see Proposition \ref{aswellas}). We aim to show that $p_0 \bot p_1$ in $\mathbb{P}_{\alpha+1}$.
			
		 Let $p \in G\subseteq \mathbb{P}_\alpha$ be a filter generic over $V$. Conditions $p_0$ and $p_1$ were incompatible in $\mathbb{P}^N_{\alpha+1}$ and, by the induction hypothesis, $G\cap \mathbb{P}^N_\alpha \subseteq \mathbb{P}^N_\alpha$ is generic over $V$, therefore
		$$V[G\cap \mathbb{P}^N_\alpha] \models \psi^{\bot}(\dot{q}_0[G],\dot{q}_1[G],\dot{r}_{\alpha}[G]).$$
		
		By absoluteness
		
		$$V[G] \models \psi^{\bot}(\dot{q}_0[G],\dot{q}_1[G],\dot{r}_{\alpha}[G]).$$
		
		Since $p$ was arbitrary, it follows that $p_0 \bot p_1$ in $\mathbb{P}_{\alpha+1}$.
		\item $\lim{\alpha}$. Follows from the induction hypothesis, since conditions have finite supports.
	\end{itemize}
	
	\par{Concerning the point 2.} 	
	\begin{itemize}
		\item $\alpha=0$. Clear.
		\item $\alpha+1$. Fix two conditions $p_0\le p_1 \in \mathbb{P}_{\alpha+1}^N$. Then $p_0=(p'_0,\dot{q}_0)$, $p_1=(p'_1,\dot{q}_1)$, where $p'_0, p'_1 \in \mathbb{P}_\alpha^N$. By the induction hypothesis $p_0'\le p_1'$ in $\mathbb{P}_\alpha$. Moreover $\dot{r}_\alpha$, $\dot{q}_0$ and $\dot{q}_1$ are $\mathbb{P}^N_\alpha$-names, so -- in the light of Proposition \ref{aswellas} -- the forcing relation
		$$p_0' \Vdash \dot{q}_0 \le \dot{q}_1$$
		holds in $\mathbb{P}^N_\alpha$ as well as in $\mathbb{P}_\alpha$.
		
		\item $\lim{\alpha}$. Follows from the induction hypothesis, since conditions have finite supports.
	\end{itemize}

	\par{Concerning the point 3.} 
		\begin{itemize}
			\item $\alpha=0$. Clear.
			
			\item $\lim{\alpha}$. Let $\mathcal{A}$ be a maximal antichain in $\mathbb{P}^N_\alpha$, and $\overline{p} \in \mathcal{A}$. Given that $\mathbb{P}_\alpha^N$ is a finite support iteration of c.c.c. forcings, it satisfies the countable chains condition, therefore we can assume that $\mathcal{A}=\{\overline{p}_n|\; n<\omega\}$. Since $\alpha$ is limit, there is some $\gamma < \alpha$ such that $\overline{p} \in \mathbb{P}_\gamma$. $\{\overline{p}_n\restriction \gamma|\; n<\omega\}$ might not be an antichain in $\mathbb{P}^N_\gamma$, however each condition in $\mathbb{P}^N_\gamma$ is compatible with some $p_n\restriction \gamma$. We can refine $\{\overline{p}_n\restriction \gamma|\; n<\omega\}$ to an antichain in $\mathbb{P}^N_\gamma$, and this antichain will remain maximal
			in $\mathbb{P}_\gamma$ by the induction hypothesis. Therefore $\{\overline{p}_n\restriction \gamma|\; n<\omega\}$ meets every condition in $\mathbb{P}_\gamma$, and in particular some $\overline{p}_n \restriction \gamma$ is compatible with $\overline{p}$ in $\mathbb{P}_\gamma$. But then $\overline{p}_n$ is compatible with $\overline{p}$ in $\mathbb{P}_\alpha$. 
			
			\item $\alpha+1$. In the light of Proposition \ref{observation}, it is sufficient to show that for any $G\subseteq \mathbb{P}_{\alpha+1}$ generic over $V$, $G \cap \mathbb{P}^N_{\alpha+1}$ is also generic over $V$. We will shows how it follows from Lemma \ref{iterationlemma}. Let $\overline{G}\subseteq \mathbb{P}_\alpha \ast \dot{\mathbb{Q}}_\alpha$ be a filter generic over $V$. Recalling the notation from \cite{kunen}, 
			$$\overline{G}=G\ast H=\{(p,\dot{q})|\;p \in G,\; \dot{q}[G]\in  H  \},$$
			
			where 
			$$G=\{p \in \mathbb{P}_\alpha|\; \exists \dot{q} \;\in \dot{\mathbb{Q}} \quad (p,\dot{q}) \in \overline{G} \},$$
			and
			$$H=\{\dot{q}[G]|\; \exists p \in G \quad (p,\dot{q})\; \in \overline{G}\}.$$
			
			It is known that for any iteration $\mathbb{P}\ast \dot{\mathbb{Q}}$, if $G\subseteq \mathbb{P}$ is generic over $V$ and $H\subseteq \dot{\mathbb{Q}}[G]$ is generic over $V[G]$, then $G\ast H$ is generic for $\mathbb{P}\ast\dot{\mathbb{Q}}$ over $V$ (for details consult for example \cite{kunen}, Section 5, Chapter VIII).	Let $G'=G \cap \mathbb{P}^N_\alpha$. It is generic for $\mathbb{P}^N_\alpha$ over $V$ by the induction hypothesis. Now for filters $G$ and $H$ defined above
			
			$$(G\ast H) \cap (\mathbb{P}^N_\alpha \ast \dot{\mathbb{Q}}^N_\alpha)=
			\{ (p,\dot{q})|\; p \in G',\; \dot{q}[G] \in H,\; \dot{q} \in \dot{\mathbb{Q}}^N_\alpha \}=$$
			$$\{ (p,\dot{q})\in \mathbb{P}^N_\alpha \ast \dot{\mathbb{Q}}^N_\alpha |\; p \in G',\; \dot{q}[G'] \in H\}=G'\ast(H\cap \dot{\mathbb{Q}}^N_\alpha[G']).$$
			
			But from the Lemma \ref{iterationlemma} we know that this is a $\mathbb{P}^N_\alpha \ast \dot{\mathbb{Q}}^N_\alpha$-generic filter over $V$. 
			
This concludes the proof of Theorem \ref{bigthm}.$\qedhere$
		\end{itemize}	
\end{proof}

Even if $N$ is an inner model of $\zfc$, usually $\mathbb{P}^N_\kappa \notin N$. Definition of $\mathbb{P}^N_\kappa$ makes use of a list of $\mathbb{P}^N_\alpha$-names, for all $\alpha<\kappa$, and although \emph{some} such enumeration belongs to $N$ (as it is a model of $\ac$), this particular might not. In what sense is $\mathbb{P}^N_\kappa$ a \emph{relativized} version of $\mathbb{P}_\kappa$, is explained by the next lemma.

\begin{lem} \label{definable}
	For each $\alpha\le \kappa$, if $p \in \mathbb{P}_\alpha$ is simple then $p$ is definable (in the language of set theory) with a parameter from $\kappa^\omega$.
\end{lem}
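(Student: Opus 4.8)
\emph{The plan is to} attach to every simple condition $p$ a single parameter $r_p \in \kappa^\omega$ coding the entire well-founded structure out of which $p$ is recursively built, and then to write one fixed formula $\phi(x,y)$ in the language of set theory saying ``$x$ is the object decoded from $y$ according to the rules defining simple conditions.'' Since $p$ will be the unique solution of $\phi(x, r_p)$, this yields the claimed definability.

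First I would isolate the combinatorial heart: a simple condition has only countably many hereditary constituents, and the constituent relation is well-founded. Call the immediate constituents of a simple $p \in \mathbb{P}_\alpha$ the finitely many ordinals in $\operatorname{supp}(p)$ together with, for each $\beta \in \operatorname{supp}(p)$ and each $m,n<\omega$, the simple condition $p^m_n \in \mathbb{P}_\beta$ occurring in the canonical name $p(\beta) = \{(m,n,p^m_n) : m,n<\omega\}$; thus each node carries finitely many ordinal labels and has countably many child-conditions. Iterating this relation, every hereditary constituent is reached from $p$ in finitely many steps, so by countable branching each ``level'' is countable and the whole constituent set is countable. Moreover every child $p^m_n$ lies in $\mathbb{P}_\beta$ with $\beta \in \operatorname{supp}(p)$, so all of its support coordinates are strictly below $\beta$; hence the relevant coordinate strictly decreases as one descends, there is no infinite descending chain of constituents, and the constituent relation is well-founded. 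I would verify these claims by induction on $\alpha$, the limit case being trivial since finite support forces $p$ to already be a condition in some $\mathbb{P}_\gamma$ with $\gamma<\alpha$, and the successor case being exactly the observation above.

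Next I would fix the coding. Enumerate the countable set of hereditary constituent-conditions as $\{t_i : i<\omega\}$ with $t_0 = p$, and record, for each $i$, the finite increasing list of support ordinals of $t_i$ (all $<\kappa$) together with the natural-number bookkeeping giving, for each support coordinate and each $(m,n)$, the index $i'<\omega$ with the corresponding $p^m_n$ equal to $t_{i'}$. All ordinal data lie in $\kappa$ and all combinatorial data lie in $\omega \subseteq \kappa$, so interleaving these countably many finite pieces along a recursive pairing of $\omega$ packs them into a single $r_p \in \kappa^\omega$. The formula $\phi(x,y)$ then asserts that $y$ codes a well-founded labelled tree of this shape and that $x = D(t_0)$, where $D$ is the unique function defined by recursion along the tree sending the trivial node to $0$ and sending any other node to the finite-support condition whose value at its $j$-th support ordinal is the name $\{(m,n, D(t_{i'(j,m,n)})) : m,n<\omega\}$. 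This is a legitimate formula of set theory: well-foundedness of the coded tree, existence and uniqueness of the recursively defined $D$, and the assembling of finite-support functions and canonical names are all expressible, and the decoding recursion terminates precisely because of the well-foundedness established above.

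The main obstacle is exactly the countability-and-well-foundedness claim, i.e.\ verifying that the hereditary constituent structure of a simple condition is countable and admits no infinite descending chain, so that it can be captured by a single $\omega$-indexed code and decoded by a terminating recursion; everything there turns on the single observation that support coordinates strictly decrease as one passes to the constituents $p^m_n$. The remaining work --- choosing a concrete pairing to land inside $\kappa^\omega$, and writing $\phi$ so that $D(t_0)$ reproduces $p$ with exactly the set-theoretic representation of conditions and names used in the construction --- is routine bookkeeping, though it must be carried out carefully enough that $p$ is genuinely the unique witness of $\phi(\cdot, r_p)$.
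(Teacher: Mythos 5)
Your proposal is correct, and its combinatorial core is the same observation the paper exploits: a simple condition is hereditarily built from countably many simple conditions, all of whose ordinal data lie below $\kappa$, so the whole object can be packed into a single element of $\kappa^\omega$. But you organize the argument differently. The paper proves the lemma by induction on $\alpha$: the base case is immediate, at a successor step the countably many constituents $p^m_n$ are definable by the induction hypothesis and their parameters are merged into one, and at a limit step finite support reduces $p$ to $p\restriction\beta$ for some $\beta<\alpha$. You instead flatten this recursion into one global coding: you show the constituent relation is countable and well-founded (a child attached at coordinate $\beta$ lies in $\mathbb{P}_\beta$, so support coordinates strictly decrease along any descent), code the entire labelled structure by a single $r_p\in\kappa^\omega$, and recover $p$ as the unique solution of one \emph{fixed} formula $\phi(x,y)$ given by recursion on the coded well-founded relation. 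This buys something real that the paper's phrasing elides: uniformity of the defining formula. Read literally, the paper's successor step combines countably many definitions of the $p^m_n$ into a definition of $r$, which is a first-order operation only if all those definitions are instances of one formula with varying parameters --- an infinite conjunction of different formulas is not a formula --- so the induction secretly needs exactly the uniform decoding scheme you construct. Moreover, your decoding by well-founded recursion is absolute between transitive models, which is what the lemma is actually used for downstream (in Lemma \ref{inclusionlemma} and in the proof of Theorem \ref{mainthm} one passes from ``the parameter lies in the transitive model $N$'' to ``the name itself lies in $N$''). The parts you defer --- the pairing into $\kappa^\omega$ and matching the exact set-theoretic representation of conditions --- are genuinely routine, so your version is, if anything, the more careful one.
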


\begin{proof}$\text{We proceed by induction on }\alpha.$
	\begin{itemize}
		\item $\alpha=0.$ Clear, since $\mathbb{P}_0$ is a trivial forcing.
		\item $\alpha+1$. Let $r=(p,\dot{q})$ be simple. We can write 
		$$\dot{q}=\{(m,n,p^m_n)|\; m,n \in \omega,\, p^m_n \in \mathbb{P}_\alpha\},$$
		 where each $p^m_n$ is simple. By the induction hypothesis each $p^m_n$ is definable with a parameter from $\kappa^\omega$, and so is $p$. Clearly $r$ can be defined from them, and so $r$ is definable with countably many parameters from $\kappa^\omega$. We can easily code them as a single parameter.
		\item $\lim{\alpha}$. Fix $p \in \mathbb{P}_\alpha$. $p$ has finite support, so there exists $\beta<\alpha$ containing the support of $p$. By the induction hypothesis $p\restriction \beta$ is definable with a parameter from $\kappa^\omega$, and $p$ is definable with parameters $p\restriction \beta$, $\beta$, and $\alpha$.$\qedhere$
	\end{itemize}
\end{proof}

From this point, we fix a list $\{\sigma_\alpha|\;\alpha<\kappa \}$ of sequences from $\kappa^\omega$, such that each name $\dot{r}_\alpha$ is definable from $\sigma_\alpha$.

\begin{lem} \label{inclusionlemma}
	Let $N$ be any transitive model of $\zfc$ containing $\kappa$, and let $M\prec H((2^\kappa)^+)$ be a countable elementary submodel such that $\{\sigma_\alpha|\;\alpha<\kappa \}\in M$, and $M\cap \kappa^\omega \subseteq N$. Then for any $\alpha\le \kappa$, $\mathbb{P}_\alpha \cap M \subseteq \mathbb{P}_\alpha^N$.
\end{lem}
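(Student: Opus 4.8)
The plan is to prove the inclusion by induction on $\alpha\le\kappa$. Three features of the construction will carry the argument: simple conditions have finite support; by Lemma \ref{definable} they are definable from parameters in $\kappa^\omega$, and each $\dot r_\alpha$ is a simple name for a real, hence a countable set definable from $\sigma_\alpha$; and the theorem that $\mathbb{P}^N_\beta\lessdot\mathbb{P}_\beta$ lets me invoke Proposition \ref{aswellas} to transfer forcing statements about absolute formulas between $\mathbb{P}^N_\beta$ and $\mathbb{P}_\beta$ at every stage. The base case $\alpha=0$ is immediate, since $\mathbb{P}_0=\{0\}$ and $0\in M$.

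For a limit ordinal $\alpha$ I would use finite supports. If $p\in\mathbb{P}_\alpha\cap M$, then its support is a finite set lying in $M$, so each of its members lies in $M$; letting $\gamma$ be the maximum, we get $\gamma\in M$, hence $\gamma+1\in M$ and $p\in\mathbb{P}_{\gamma+1}\cap M$. The induction hypothesis gives $p\in\mathbb{P}^N_{\gamma+1}$, and since $\mathbb{P}^N_{\gamma+1}\subseteq\mathbb{P}^N_\alpha$ (direct limit) we are done. The same argument settles $\alpha=\kappa$.

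The successor step is the heart of the matter. Given $p=(p',\dot q)\in\mathbb{P}_{\alpha+1}\cap M$, elementarity yields $p'\in M$, and writing $\dot q=\{(m,n,p^m_n)\mid m,n<\omega\}$ we get each $p^m_n\in M$; by the induction hypothesis $p'\in\mathbb{P}^N_\alpha$ and each $p^m_n\in\mathbb{P}^N_\alpha$, so $\dot q$ is already a $\mathbb{P}^N_\alpha$-name. If $\alpha$ is not in the support of $p$ then $p'\Vdash\dot q=0$ and $p$ lies in $\mathbb{P}^N_{\alpha+1}$ irrespective of whether $\dot{\mathbb{Q}}^N_\alpha$ is trivial. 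So assume $\alpha$ is in the support; then I must check that $\dot{\mathbb{Q}}^N_\alpha$ coincides with the genuine Suslin forcing $\dot{\mathbb{Q}}_\alpha$, i.e. that $\dot r_\alpha\in N$, that $\dot r_\alpha$ is a $\mathbb{P}^N_\alpha$-name, and that $\psi(-,\dot r_\alpha)$ is forced to define a Suslin forcing.

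To obtain these, note first that $\alpha$ lies in the finite support of $p$, which is contained in $M$, so $\alpha\in M$; since $\{\sigma_\beta\mid\beta<\kappa\}\in M$ this gives $\sigma_\alpha\in M\cap\kappa^\omega\subseteq N$, and as $\dot r_\alpha$ is definable from $\sigma_\alpha$ by an absolute formula we conclude both $\dot r_\alpha\in M$ and $\dot r_\alpha\in N$. Because $\dot r_\alpha$ is a simple name for a real it is a countable set, correctly seen as such by $M\prec H((2^\kappa)^+)$, whence $\dot r_\alpha\subseteq M$; thus every condition occurring in $\dot r_\alpha$ lies in $\mathbb{P}_\alpha\cap M$ and so, by the induction hypothesis, in $\mathbb{P}^N_\alpha$, making $\dot r_\alpha$ a $\mathbb{P}^N_\alpha$-name. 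That $\psi(-,\dot r_\alpha)$ defines a Suslin forcing then transfers from $\mathbb{P}_\alpha$ to $\mathbb{P}^N_\alpha$ via Proposition \ref{aswellas} and the absoluteness of the defining analytic conditions. Hence $\dot{\mathbb{Q}}^N_\alpha=\dot{\mathbb{Q}}_\alpha$, and since $p'\Vdash\psi^{\in}(\dot q,\dot r_\alpha)$ transfers to $\mathbb{P}^N_\alpha$ as well, $p\in\mathbb{P}^N_{\alpha+1}$. I expect the verification that $\dot r_\alpha$ is a $\mathbb{P}^N_\alpha$-name to be the main obstacle: it requires combining the finiteness of supports (to push $\sigma_\alpha$ into $M$), the countability of simple names (to push all of $\dot r_\alpha$, not merely $\dot r_\alpha$ itself, into $M$), and the hypothesis $M\cap\kappa^\omega\subseteq N$, all routed through the induction hypothesis; the transfer of Suslinness, though standard, also merits care since c.c.c.-ness must be invoked specifically for the Suslin case.
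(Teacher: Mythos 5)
Your proof is correct and follows essentially the same route as the paper's: induction on $\alpha$, with the limit case handled via finite supports and elementarity, and the successor case reduced to showing that $\dot{q}$ and $\dot{r}_\alpha$ are $\mathbb{P}^N_\alpha$-names (using countability of simple names to get $\dot{r}_\alpha \subseteq M$) and that $\dot{r}_\alpha \in N$ (via $\sigma_\alpha \in M \cap \kappa^\omega \subseteq N$). If anything, you are more explicit than the paper on two points it glosses over --- the case where $\alpha$ is outside the support of the condition, and the transfer of ``$\psi(-,\dot{r}_\alpha)$ defines a Suslin forcing'' from $\mathbb{P}_\alpha$ to $\mathbb{P}^N_\alpha$, where the c.c.c.\ part indeed needs the separate $\omega_1$-preservation argument you flag.
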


\begin{proof}
	We proceed by induction. 
	
	\begin{itemize}
		\item $\alpha=0$. Clear.
		\item $\lim{\alpha}.$ Fix $r \in \mathbb{P}_\alpha \cap M$. By the elementarity of $M$, there exists $\gamma \in \alpha\cap M$ that contains the support of $r$. From the induction hypothesis it follows that $r \restriction \gamma \in \mathbb{P}_\gamma^N$. It is routine to verify by induction that for all $\gamma\le \delta\le \alpha$, $r\restriction \delta \in \mathbb{P}_\delta^N$.
		
		\item $\alpha+1.$ Fix $r=(p,\dot{q}) \in M \cap (\mathbb{P}_\alpha \ast \dot{\mathbb{Q}}_\alpha)$. We know that $p \in \mathbb{P}_\alpha^N$ by the induction hypothesis. The condition $\dot{q}$ is of the form $\dot{q}=\{(m,n,p^m_n)|\;m,n<\omega, \; p^m_n \in \mathbb{P}_\alpha  \}$. Given that all conditions $p^m_n$ belong to $M$, they also belong to $\mathbb{P}_\alpha^N$ by the induction hypothesis. This shows that $\dot{q}$ is a $\mathbb{P}^N_\alpha$-name. To see that $p \Vdash \dot{q} \in \dot{\mathbb{Q}}_\alpha^N$, fix a generic filter $G\subseteq \mathbb{P}_\alpha$ containing $p$ (by Theorem \ref{bigthm} and Lemma \ref{aswellas} it is irrelevant whether we consider the $\Vdash$ relation in $\mathbb{P}_\alpha$ or $\mathbb{P}_\alpha^N$). We have two cases to consider.
		\begin{itemize}
			\item $\dot{\mathbb{Q}}_\alpha[G]=\{0\}$.
			
			First, $\dot{r}_\alpha$ is a $\mathbb{P}^N_\alpha$-name for the very same reason as $\dot{q}$. Second, given that $\sigma_\alpha \in \kappa^\omega \cap M \subseteq N$, we know that $\dot{r}_\alpha \in N$. It follows that this case will occur only if $\psi^\in(-,\dot{r}_\alpha[G])$ does not define a Suslin forcing. But by absoluteness, it also doesn't define a Suslin forcing in $V[G\cap \mathbb{P}^N_\alpha]$, and so $\dot{q}[G]=0\in\dot{\mathbb{Q}}^N_\alpha[G]$.
			
			\item $\dot{\mathbb{Q}}_\alpha[G]$ is defined by $\psi^\in(-,\dot{r}_\alpha[G])$.
			
			As in the previous case, we know that $\dot{r}_\alpha$ is a $\mathbb{P}_\alpha^N$-name belonging to $N$, and  so $\psi^\in(\dot{q}[G],\dot{r}_\alpha[G])$ holds in $V[G\cap \mathbb{P}_\alpha^N]$. But this just says that $\dot{q}[G]\in \dot{\mathbb{Q}}^N_\alpha[G]$.$\qedhere$
		\end{itemize}
		
		
	\end{itemize}
\end{proof}

\begin{proof}[Proof of Theorem \ref{mainthm}]
	We start with a model $V\models \slice + ``2^\omega=\kappa"$, and we assume moreover that the sequence $\{M_\alpha|\; \alpha<\omega_1\}$ witnessing $\slice$ satisfies the following stronger property:
	
	$$\kappa^{\omega}= \displaystyle{ \bigcup_{\alpha <\omega_1} \kappa^{\omega} \cap M_\alpha}.$$
	
	Such a model is easy to get, for example by forcing with $\mathbb{C}_\kappa$ over a model of $\ch$ (see Corollary \ref{cohen}). We also assume that $\kappa \in M_0$.
	
	\par Let $\mathbb{P}=\{\mathbb{P}_\alpha\ast \dot{\mathbb{Q}}_\alpha|\; \alpha<\kappa\}$ be the iteration described above, which forces 
	$$\ma(\text{Suslin})+``2^\omega=\kappa".$$
	We claim that if $G\subseteq \mathbb{P}$ is generic over $V$, then the sequence $V[G\cap\mathbb{P}^{M_\alpha}]$ witnesses $\slice$ in $V[G]$. For this we need to show two things
	
	\begin{enumerate}
		\item If $r \in \omega^\omega\cap V[G]$, then $r \in V[G\cap\mathbb{P}^{M_\alpha}]$ for some $\alpha<\omega_1$.
		\item None of the models $V[G\cap\mathbb{P}^{M_\alpha}]$ contains all reals from $V[G]$\footnote{So, strictly speaking, $\slice$ won't be witnessed by $\{M_\alpha|\; \alpha<\omega_1\}$, rather some its subsequence}.
	\end{enumerate}
	
	\par Concerning $1.$ suppose that $\mathbb{P}_\kappa \Vdash \dot{r}\in \omega^\omega$. We can assume that 	$$\dot{r}=\{(m,n,p_n^m)|\; m,n<\omega \},$$
	and all conditions $p_n^m$ are simple. Fix a countable elementary submodel $\overline{M}\prec \operatorname{H}((2^\kappa)^+)$, that contains the list $\{\sigma_\alpha|\; \alpha<\kappa\}$, and the name $\dot{r}$. We pick $\alpha$ big enough so that $\overline{M}\cap \kappa^\omega\subseteq M_\alpha$. Applying Lemma \ref{inclusionlemma} with $M=\overline{M}$ and $N=M_\alpha$, we see that $\dot{r}$ is a $\mathbb{P}^{M_\alpha}$-name. Therefore 
	$$\dot{r}[G]=\dot{r}[G\cap \mathbb{P}^{M_\alpha}] \in V[G\cap\mathbb{P}^{M_\alpha}].$$
	
	\par Concerning $2.$ fix a real $r \in \baire \setminus M_\alpha$. We can find a representation of the Cohen forcing $\mathbb{C}$, such that the real $r$ is definable from it\footnote{The same is obviously true for any other Suslin forcing. We choose $\mathbb{C}$ for simplicity. The only point of this, is to argue that $\mathbb{P}^{M_\alpha}$ will vanish on some coordinate, on which $\mathbb{P}$ will add a real.}. For concreteness, let us put
	
	$$\mathbb{C}_r=\omega^{<\omega}\cup \{r\} \subseteq \baire,$$
	where $\omega^{<\omega}$ is identified with the set of sequences from $\baire$ that are eventually equal zero. We order $\omega^{<\omega}$ by the end-extension and we declare that
	$$\forall{s \in \omega^{<\omega}}\; s \bot r.$$
	
	Since $\mathbb{C}_r$ is clearly Suslin, there exists a real $r'$ such that
	$$\mathbb{C}_r = \{x \in \baire|\; \psi^{\in}(x,r') \}.$$
	
	We claim that $r' \notin M_\alpha$. Suppose otherwise. Let $\sigma(x)$ stand for the formula
	$$\psi^{\in}(x,r') \wedge x \notin \omega^{<\omega}.$$
	Note that
	$$V \models \exists\; x \in \baire \; \sigma(x),$$
	and so by absoluteness the same holds in $M_\alpha$. Fix $r'' \in \baire \cap M_\alpha$, such that
	$$M_\alpha \models \sigma(r'').$$
	Again $V \models \sigma(r'')$ by absoluteness. But this shows that $r=r''$, and therefore $r \in M_\alpha$, contradicting the choice of $r$.
	
	Once we know that $r' \notin M_\alpha$, let us fix $\gamma <\kappa$ such that $\mathbb{P}_\gamma \Vdash \dot{r}_\gamma = r'$. It follows that
	$$\mathbb{P}^{M_\alpha}_\gamma \Vdash \dot{\mathbb{Q}}^{M_\alpha}_\gamma=\{0\},$$
	and
	$$\mathbb{P}_\gamma \Vdash \dot{\mathbb{Q}}_\gamma=\mathbb{C}_r.$$
	
	Let $c$ be the real added by $\dot{\mathbb{Q}}_\gamma$ over $V[G\cap \mathbb{P}_\gamma]$. We claim that $c \notin V[G\cap \mathbb{P}^{M_\alpha}_\kappa]$. Suppose towards a contradiction that $c \in V[G\cap \mathbb{P}^{M_\alpha}_\kappa]$. Let $\mathbb{R}=\mathbb{P}_\kappa^{M_\alpha}/(G\cap \mathbb{P}^{M_\alpha}_{\gamma+1})$. If $c \in V[G\cap \mathbb{P}^{M_\alpha}_\kappa]$, then either $c$ is already in the initial fragment $V[G\cap \mathbb{P}^{M_\alpha}_{\gamma+1}]$, or is added generically over it by $\mathbb{R}$. The first option is ruled out, because 
	$$V[G\cap \mathbb{P}^{M_\alpha}_{\gamma+1}]=V[G\cap \mathbb{P}^{M_\alpha}_\gamma]\subseteq V[G\cap \mathbb{P}_\gamma],$$
	and $c$ was added generically over that model. Therefore in $V[G\cap \mathbb{P}^{M_\alpha}_{\gamma+1}]$ there is a $\mathbb{R}$-name $\dot{x}$ such that two of the following hold:
	\begin{itemize}
		\item $\mathbb{R} \Vdash ``\text{$\dot{x}$ is a new real}"$,
		\item $\dot{x}[G\cap \mathbb{P}_\kappa^{M_\alpha}]=c.$
	\end{itemize}
	
	Consider now the forcing $\mathbb{R}$ in $V[G\cap \mathbb{P}_{\gamma+1}]$. It is still true that $\mathbb{R}$ forces $\dot{x}$ to be a new real, since this is an absolute property of the forcing and the name (it says that for any $r \in \mathbb{R}$ there is $n<\omega$, such that $r$ does not decide $\dot{x}(n)$). But this is a contradiction, since $c=\dot{x}[G] \in V[G\cap \mathbb{P}_{\gamma+1}]$.
	
This concludes the proof of Theorem \ref{mainthm}.$\qedhere$
\end{proof}
\subsection{Computation of cardinal invariants}

For any given group $G$, one can study an associated cardinal invariant $\operatorname{c(G)}$ that stands for the minimal cardinality $\kappa$, for which the group $G$ can be represented as a union of a chain of the length $\kappa$, consisting of proper subgroups of $G$. A substantial amount of literature is devoted to study this cardinal invariant for symmetric groups of infinite sets (for example \cite{c1}, \cite{c2}, \cite{c3}, \cite{c4}). It is known that 
$$\mathfrak{g}\le\sym \le \mathfrak{d},$$
where $\mathfrak{g}$ is the \emph{groupwise density number}. The lower bound was proved by Brendle and Losada \cite{bl}, and the upper bound is due to Sharp and Thomas \cite{st}.
\par It is easy to observe that $\slice \implies ``\sym=\omega_1"$: if $\slice$ is witnessed by a sequence $(M_\alpha)_{\alpha<\omega_1}$, then the equality $\sym=\omega_1$ is witnessed by the sequence of groups $(M_\alpha \cap \operatorname{Sym}(\omega))_{\alpha<\omega_1}$. As a matter of fact, this observation shows that Theorem \ref{mainthm} generalizes Lemma 2.6 from \cite{zhang}, which claims that the equality $\sym=\omega_1$ is preserved under finite support iterations of Suslin forcings. Indeed, what the proof of Theorem \ref{mainthm} shows, is that any finite support iteration of Suslin forcings preserves $\slice$ over a model of a stronger variant of $\slice$.

Together with some well-known results (see \cite{blass}), we have the following series of inequalities:
$$\sym\ge\mathfrak{g}\ge\mathfrak{h}\ge \mathfrak{t}\ge \mathfrak{m} \ge \omega_1.$$
It follows that in our model all these invariants are equal $\omega_1$. Together with the fact that $\ma(\text{Suslin})$ decides all cardinal characteristics from the Cicho\'n's diagram to be equal $2^\omega$, we have computed all of the classical cardinal invariants of the continuum, except $\mathfrak{s}$.

\section{Slicing $2^{<\kappa}$}

Although $\ma_{\omega_1}$ is inconsistent with $\slice$, it is consistent with $\slicekappa$ for any $\kappa>\omega_1$. The idea of the proof is very much like that of Theorem \ref{mainthm}, and actually even simpler, because we don't need to code the steps of the iteration as analytic sets.

\begin{thm} \label{mainthm2}
Assume that $\omega<\kappa\le\theta$ are regular cardinals, and $\theta^{<\kappa}=\theta$. Then the following theory is consistent for any cardinal $\lambda<\kappa$:
$$\zfc+\ma_{\lambda}+\slicekappa + ``2^\omega=\theta".$$
\end{thm}

We are going to apply a finite support iteration of the form
$$\mathbb{P}=\{\mathbb{P}_\alpha \ast \dot{\mathbb{Q}}_\alpha|\; \alpha<\theta \},$$
where for each $\alpha<\theta$
$$\mathbb{P}_\alpha \Vdash \dot{\mathbb{Q}}_\alpha=(\lambda,\dot{\le}_\alpha).$$
We also assume that $0\in \lambda$ is always the largest element in $\dot{\mathbb{Q}}_\alpha$. We want to arrange the iteration so that each c.c.c. partial order of size $\lambda$ will appear cofinally many times (see \cite{kunen}, p. 278), and for this reason, we will be considering only names of the form

$$\dot{\le}_\alpha =\{ (\phi(\beta),p_i^\beta)|\; i<\omega, \beta<\lambda \},$$
where $\phi:\lambda \rightarrow \lambda\times\lambda$ is a fixed bijection, definable from $\lambda$. A standard induction shows that for any $\alpha\le \theta$ there exists at most $\theta$-many such names, and $|\mathbb{P}_\alpha|\le \theta$. Using an appropriate bookkeeping we can include all c.c.c. partial orders of size $\lambda$ in our iteration, and therefore we obtain:

\begin{thm} Under the assumptions of Theorem \ref{mainthm2}
	$$\mathbb{P}_\theta \Vdash \ma_\lambda + ``2^\omega=\theta".$$
\end{thm}

\begin{defin} By induction on $\alpha$, we define the class of \emph{simple} $\mathbb{P}_\alpha$-conditions.
	\begin{itemize}
		\item $\alpha=0.$ $\mathbb{P}_0=\{0\}$, and we declare $0$ to be simple.
		\item $\alpha+1.$ $(p,\dot{q})\in \mathbb{P}_{\alpha+1}$ is simple if $p \in \mathbb{P}_\alpha$ is simple, $\dot{q}=\{(\gamma_n,p_n)|\; n<\omega\}$, and the conditions $p_n$ are simple.
		\item $\lim{\alpha}.$ $p \in \mathbb{P}_\alpha$ is simple if for each $\beta<\alpha$, $p \restriction \beta \in \mathbb{P}_\beta$ is simple.
	\end{itemize}
\end{defin}
Like in the previous section, it is easy to check that the set of simple conditions is always dense.

\begin{lem} \label{definable2}
	For each $\alpha\le \theta$, if $p \in \mathbb{P}_\alpha$ is simple then $p$ is definable (in the language of set theory) with a parameter from $\theta^\omega$.
\end{lem}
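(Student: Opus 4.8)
The plan is to mimic the proof of Lemma \ref{definable} almost verbatim, replacing $\kappa^\omega$ by $\theta^\omega$ and adapting to the present shape of simple conditions. I would argue by induction on $\alpha \le \theta$, proving at each stage that every simple $p \in \mathbb{P}_\alpha$ is definable in the language of set theory from a single parameter lying in $\theta^\omega$.

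For the base case $\alpha = 0$ the order $\mathbb{P}_0 = \{0\}$ is trivial, so its unique condition $0$ is definable without parameters. At a successor $\alpha+1$, a simple condition has the form $r = (p,\dot{q})$ with $p \in \mathbb{P}_\alpha$ simple and $\dot{q} = \{(\gamma_n,p_n)\mid n<\omega\}$, where each $\gamma_n<\lambda$ and each $p_n \in \mathbb{P}_\alpha$ is simple. By the induction hypothesis $p$ and every $p_n$ are definable from parameters in $\theta^\omega$, while the ordinals $\gamma_n$ assemble into a sequence $(\gamma_n)_{n<\omega} \in \lambda^\omega \subseteq \theta^\omega$. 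Thus $r$ is definable from countably many parameters in $\theta^\omega$, which I then fold into a single parameter. At a limit $\alpha$, I use that $r$ has finite support: I pick $\beta<\alpha$ bounding the support, observe that $r\restriction\beta$ is definable from a parameter in $\theta^\omega$ by the induction hypothesis, and note that $r$ is definable from $r\restriction\beta$ together with the ordinals $\beta$ and $\alpha$ (both $\le\theta$, hence codable as constant sequences in $\theta^\omega$, with $\theta$ itself a fixed definable object).

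The only point requiring care — and the single genuine ingredient beyond bookkeeping — is the coding of countably many parameters from $\theta^\omega$ into one. This is exactly where the standing hypothesis $\theta^{<\kappa}=\theta$ enters: since $\omega<\kappa$ we get $\theta^\omega=\theta$, whence $(\theta^\omega)^\omega$ has cardinality $\theta$ and admits a definable injection into $\theta^\omega$ (via a fixed definable bijection $\omega\times\omega\to\omega$). With this coding in hand, each inductive step produces the required single parameter in $\theta^\omega$. I do not anticipate any serious obstacle: the argument is structurally identical to Lemma \ref{definable}, the passage from names for reals to names for ordinals below $\lambda$ altering only the bookkeeping of the $\gamma_n$, and the needed cardinal arithmetic being supplied directly by the assumption on $\theta$.
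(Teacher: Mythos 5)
Your proof is correct and follows the paper's own argument essentially verbatim: the same induction with a trivial base case, the same decomposition of a simple successor condition $(p,\dot{q})$ with the countably many parameters folded into one, and the same finite-support argument at limit stages. One small remark: your appeal to $\theta^{<\kappa}=\theta$ is superfluous, since coding a countable sequence of elements of $\theta^\omega$ as a single element of $\theta^\omega$ requires only a fixed definable bijection $\omega\times\omega\to\omega$ (which you in fact invoke), and no cardinal arithmetic --- a purely cardinality-based argument would not even yield definability of the coding.
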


\begin{proof} $\text{  }$
	\begin{itemize}
		\item $\alpha=0.$ Clear.
		\item $\alpha+1$. Let $r=(p,\dot{q})$ be a simple condition. We can write $\dot{q}=\{(\gamma_n,p_n)|\;n<\omega \}$, where conditions $p_n$ are simple. By the induction hypothesis each $p_n$ is definable with a parameter from $\theta^\omega$, and so is $p$. Clearly $r$ can be defined from them, and so $r$ is definable with countably many parameters, which we can code as one.	
		\item $\lim{\alpha}$. Fix $r \in \mathbb{P}_\alpha$. $r$ has finite support, so there exists $\beta<\alpha$ containing the support of $r$. By the induction hypothesis $p\restriction \beta$ is definable with a parameter from $\theta^\omega$, and so $p$ is definable with the parameters $p\restriction \beta$, $\beta$, and $\alpha$.$\qedhere$
	\end{itemize}
\end{proof}

An immediate consequence is that each of the names $\dot{\le}_\alpha$ is definable with some parameter $\sigma_\alpha \in \theta^\lambda$. Like previously, we fix a list of such parameters $\{\sigma_\alpha|\; \alpha<\theta\}\subseteq \theta^\lambda$. We define by induction the relativized forcings $\mathbb{P}_\kappa^N \subseteq \mathbb{P}_\kappa$, taking into account only names from some transitive class $N$.

\begin{itemize}
	\item $\mathbb{P}^N_0=\{0\}$,
	\item Assume $\mathbb{P}^N_\alpha$ is defined. We define a $\mathbb{P}^N_\alpha$-name $\dot{\mathbb{Q}}^N_\alpha$ as follows
	
		\begin{itemize}
			\item $\dot{\mathbb{Q}}^N_\alpha=\dot{\mathbb{Q}}_\alpha$ if $\dot{\mathbb{Q}}_\alpha \in N$, and 
			$\dot{\mathbb{Q}}_\alpha$ is a $\mathbb{P}^N_\alpha$-name,
			\item $\dot{\mathbb{Q}}^N_\alpha=\{0\}$ otherwise.
		\end{itemize}
	\item $\mathbb{P}^N_{\alpha+1}=\mathbb{P}^N_\alpha\ast \dot{\mathbb{Q}}^N_\alpha$.
\end{itemize}

In limit steps we take direct limits, so $\mathbb{P}^N_\kappa\subseteq \mathbb{P}_\kappa$. 

\begin{lem}
	Let $N$ be a transitive model of $\zfc$, containing $\theta$. Let $M\prec H((2^\theta)^+)$ be an elementary submodel, such that $\lambda+1 \subseteq M$, and $\{\sigma_\alpha|\; \alpha <\theta \}\in M$ (see the remark after Lemma \ref{definable2}). We assume moreover that $\theta^\lambda \cap M \subseteq N$. Then for each $\alpha\le \theta$, $\mathbb{P}_\alpha \cap M \subseteq \mathbb{P}_\alpha^N$.
\end{lem}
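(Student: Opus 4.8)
The plan is to mirror the proof of Lemma \ref{inclusionlemma}, running an induction on $\alpha\le\theta$, the only genuinely new feature being that $M$ now has size $\lambda$ rather than being countable. The base case $\alpha=0$ is trivial, and the limit case goes through verbatim: given $r\in\mathbb{P}_\alpha\cap M$, elementarity of $M$ produces some $\gamma\in\alpha\cap M$ containing the (finite) support of $r$, the induction hypothesis yields $r\restriction\gamma\in\mathbb{P}^N_\gamma$, and a routine secondary induction shows $r\restriction\delta\in\mathbb{P}^N_\delta$ for all $\gamma\le\delta\le\alpha$, since direct limits are taken and supports are finite.

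The successor step $\alpha+1$ is where the argument must be adapted. I would fix $r=(p,\dot q)\in\mathbb{P}_{\alpha+1}\cap M$, with $p=r\restriction\alpha$ and $\dot q=r(\alpha)$. If $\dot q$ is the trivial name for the top element $0$, then $r$ already lies in $\mathbb{P}_\alpha\cap M$ and we are done by induction. Otherwise $\alpha$ belongs to the finite support of $r$; since that support is definable from $r\in M$ and is finite, it is a subset of $M$, so $\alpha\in M$ and hence $p\in\mathbb{P}_\alpha\cap M\subseteq\mathbb{P}^N_\alpha$ by induction. It then remains, exactly as in Lemma \ref{inclusionlemma}, to verify three claims: that $\dot q$ is a $\mathbb{P}^N_\alpha$-name, that $\dot{\mathbb{Q}}_\alpha=(\lambda,\dot{\le}_\alpha)$ is a $\mathbb{P}^N_\alpha$-name, and that $\dot{\mathbb{Q}}_\alpha\in N$; together these give $\dot{\mathbb{Q}}^N_\alpha=\dot{\mathbb{Q}}_\alpha$ and therefore $r\in\mathbb{P}^N_\alpha\ast\dot{\mathbb{Q}}^N_\alpha=\mathbb{P}^N_{\alpha+1}$.

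For the first claim, $\dot q=\{(\gamma_n,p_n)\mid n<\omega\}$ is a countable object, so from $\dot q\in M$ and $\omega\subseteq M$ elementarity gives $\dot q\subseteq M$; thus each $p_n\in\mathbb{P}_\alpha\cap M\subseteq\mathbb{P}^N_\alpha$ by induction and each $\gamma_n\in\lambda\subseteq M$, whence $\dot q$ is a $\mathbb{P}^N_\alpha$-name. The main obstacle, and the real departure from the countable case, is the second claim, since $\dot{\le}_\alpha=\{(\phi(\beta),p_i^\beta)\mid i<\omega,\ \beta<\lambda\}$ has size $\lambda$, so one cannot infer $\dot{\le}_\alpha\subseteq M$ merely from $\dot{\le}_\alpha\in M$ as in the countable setting. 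Here the hypothesis $\lambda+1\subseteq M$ does the work: from $\{\sigma_\beta\mid\beta<\theta\}\in M$ and $\alpha\in M$ we get $\sigma_\alpha\in M$, hence $\dot{\le}_\alpha\in M$ (it is definable from $\sigma_\alpha$ by the remark after Lemma \ref{definable2}), and then for each $\beta<\lambda\subseteq M$ and $i<\omega\subseteq M$ the condition $p_i^\beta$ is definable in $M$, so $p_i^\beta\in\mathbb{P}_\alpha\cap M\subseteq\mathbb{P}^N_\alpha$ by induction. Thus $\dot{\le}_\alpha$, and with it $\dot{\mathbb{Q}}_\alpha$, is a $\mathbb{P}^N_\alpha$-name. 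Finally, for the third claim, $\sigma_\alpha\in\theta^\lambda\cap M\subseteq N$ by assumption, so $\dot{\le}_\alpha$, being definable from $\sigma_\alpha$ together with $\lambda$ and $\phi$ (all of which lie in $N$, as $\lambda<\theta\in N$), belongs to $N$, and therefore $\dot{\mathbb{Q}}_\alpha=(\lambda,\dot{\le}_\alpha)\in N$. This closes the successor step and completes the induction.
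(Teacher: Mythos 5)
Your proposal is correct and follows essentially the same induction as the paper's own proof: the base and limit cases are identical, and in the successor step you verify the same three claims (that $\dot q$, $\dot{\le}_\alpha$ are $\mathbb{P}^N_\alpha$-names and that $\dot{\mathbb{Q}}_\alpha\in N$) using $\lambda+1\subseteq M$, $\sigma_\alpha\in M$, and $\theta^\lambda\cap M\subseteq N$ exactly as the paper does. Your explicit case split on whether $\alpha$ lies in the support of $r$ (to justify $\alpha\in M$) is a small refinement of a point the paper passes over silently, but it does not change the argument.
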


\begin{proof}
	We proceed by induction. 
	
	\begin{itemize}
		\item $\alpha=0$. Clear.
		\item $\lim{\alpha}.$ Fix $r \in \mathbb{P}_\alpha \cap M$. By the elementarity of $M$, there exists $\gamma \in \alpha\cap M$ that contains the support of $r$. From the induction hypothesis it follows that $r \restriction \gamma \in \mathbb{P}_\gamma^N$. It is routine to verify by induction that for all $\gamma\le \delta\le \alpha$, $r\restriction \delta \in \mathbb{P}_\delta^N$.
		\item $\alpha+1.$ Fix $r=(p,\dot{q}) \in M \cap (\mathbb{P}_\alpha \ast \dot{\mathbb{Q}}_\alpha)$. Clearly $p \in \mathbb{P}_\alpha^N$ by the induction hypothesis. The name $\dot{q}$ is of the form
		$$\dot{q}=\{(\gamma_n,p_n)|\; n<\omega\},$$
		and for each $n<\omega$, $p_n \in \mathbb{P}_\alpha \cap M \subseteq \mathbb{P}_\alpha^N$. This shows that $\dot{q}$ is a $\mathbb{P}^N_\alpha$-name. it remains to show that $\mathbb{P}_\alpha^N\Vdash \dot{\mathbb{Q}}_\alpha^N=\dot{\mathbb{Q}}_\alpha$, and this in turn reduces to showing that $\dot{\mathbb{Q}}_\alpha$ is a $\mathbb{P}^N_\alpha$-name belonging to $N$. To see this, let us note that, since $(p,\dot{q}) \in M$, also $\alpha \in M$, and so $\sigma_\alpha \in \theta^\lambda \cap M\subseteq N$. It follows that $\dot{\mathbb{Q}}_\alpha \in N\cap M$. Recall, that $\dot{\mathbb{Q}}_\alpha$ is a $\mathbb{P}_\alpha$-name for a partial ordering of the form
		$$\dot{\le}_\alpha=\{ (\phi(\beta),p^i_\beta )|\;\beta<\lambda, i<\omega \}.$$
		Given that $\lambda+1 \subseteq M$, we conclude that each of the conditions $p^i_\beta$ belongs to $M$, and  by the induction hypothesis, also to $\mathbb{P}_\alpha^N$. This shows that $\dot{\le}_\alpha$, and in turn also $\dot{\mathbb{Q}}_\alpha$, are $\mathbb{P}_\alpha^N$-names, and concludes the proof.$\qedhere$
	\end{itemize}
\end{proof}

\begin{lem} \label{generalconmpletesuborders}
	If $N$ is a transitive class, then for all $\alpha \le \theta$
	$$\mathbb{P}^N_\alpha \lessdot\mathbb{P}_\alpha.$$
	Specifically:
	\begin{enumerate}
		\item If $p_0 \bot p_1$ in $\mathbb{P}_\alpha^N$, then $p_0 \bot p_1$ in $\mathbb{P}_\alpha$.
		\item If $p_0 \le p_1$ in $\mathbb{P}_\alpha^N$, then $p_0 \le p_1$ in $\mathbb{P}_\alpha$.
		\item If $\mathcal{A}\subseteq \mathbb{P}^N_\alpha$ is a maximal antichain, then $\mathcal{A}$ is maximal in $\mathbb{P}_\alpha$.
	\end{enumerate}	
\end{lem}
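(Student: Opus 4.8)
The plan is to run the same simultaneous induction on $\alpha$ that established the corresponding statement for the Suslin iteration in the previous section (the theorem asserting $\mathbb{P}^N_\alpha \lessdot \mathbb{P}_\alpha$), proving clauses 1--3 together; once they hold for all $\alpha$, the relation $\mathbb{P}^N_\alpha \lessdot \mathbb{P}_\alpha$ follows. The point to stress is that the present iteration is \emph{easier}: when an iterand is retained in $N$ it is retained as the very same name, so the analytic-absoluteness arguments of the Suslin case degenerate into outright equalities, and no analogue of Lemma \ref{iterationlemma} with its $\coanal$ coding is required.

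For the successor step, note first that if $\dot{\mathbb{Q}}^N_\alpha = \{0\}$ then $\mathbb{P}^N_{\alpha+1} \cong \mathbb{P}^N_\alpha$ and all three clauses are immediate from the induction hypothesis. Otherwise the definition of $\mathbb{P}^N_\alpha$ forces $\dot{\mathbb{Q}}^N_\alpha = \dot{\mathbb{Q}}_\alpha$, with $\dot{\le}_\alpha$ a $\mathbb{P}^N_\alpha$-name. I would fix $p_0 = (p'_0,\dot{q}_0)$ and $p_1 = (p'_1,\dot{q}_1)$ in $\mathbb{P}^N_{\alpha+1}$, where $p'_0,p'_1 \in \mathbb{P}^N_\alpha$ and $\dot{q}_0,\dot{q}_1$ name elements of $\lambda$. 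For clause 1, if $p'_0 \bot p'_1$ in $\mathbb{P}_\alpha$ we are done; otherwise I would fix $p \le p'_0,p'_1$ in $\mathbb{P}_\alpha$ and a generic $G \ni p$, set $G' = G \cap \mathbb{P}^N_\alpha$ (generic over $V$ by the induction hypothesis), and observe that because $\dot{\le}_\alpha,\dot{q}_0,\dot{q}_1$ are all $\mathbb{P}^N_\alpha$-names their evaluations cannot see the difference between $G$ and $G'$: $\dot{\mathbb{Q}}^N_\alpha[G'] = \dot{\mathbb{Q}}_\alpha[G]$ and $\dot{q}_i[G'] = \dot{q}_i[G]$. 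Hence the incompatibility of $\dot{q}_0$ and $\dot{q}_1$, witnessed in $\dot{\mathbb{Q}}^N_\alpha[G']$ by the incompatibility of $p_0,p_1$, is the \emph{same} fact as their incompatibility in $\dot{\mathbb{Q}}_\alpha[G]$ --- precisely Proposition \ref{aswellas} applied to the absolute, bounded relation $x \bot_{\dot{\mathbb{Q}}} y$. As $p$ was arbitrary, $p_0 \bot p_1$ in $\mathbb{P}_{\alpha+1}$. Clause 2 is the identical argument with $\le_{\dot{\mathbb{Q}}}$ in place of $\bot_{\dot{\mathbb{Q}}}$.

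For clause 3 at a successor I would argue exactly as in the Suslin case: by Proposition \ref{observation} it suffices to show that $G \cap \mathbb{P}^N_{\alpha+1}$ is generic over $V$ for every $\mathbb{P}_{\alpha+1}$-generic $G$. Writing $G = G_\alpha \ast H$ and $G'_\alpha = G_\alpha \cap \mathbb{P}^N_\alpha$ (generic over $V$ by the induction hypothesis), the same computation displayed in the Suslin proof gives $G \cap \mathbb{P}^N_{\alpha+1} = G'_\alpha \ast (H \cap \dot{\mathbb{Q}}^N_\alpha[G'_\alpha])$; but now the analogue of Lemma \ref{iterationlemma} is trivial, since $\dot{\mathbb{Q}}^N_\alpha[G'_\alpha] = \dot{\mathbb{Q}}_\alpha[G_\alpha]$ yields $H \cap \dot{\mathbb{Q}}^N_\alpha[G'_\alpha] = H$, a filter generic over $V[G_\alpha] \supseteq V[G'_\alpha]$ and therefore over $V[G'_\alpha]$. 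The limit steps are routine: clauses 1 and 2 follow from finite supports, and clause 3 at a limit $\alpha$ is copied verbatim from the Suslin argument --- $\mathbb{P}^N_\alpha$ is c.c.c., so a maximal antichain is countable, and restricting it to some $\mathbb{P}^N_\gamma$ with $\gamma < \alpha$, refining to a maximal antichain there (maximal in $\mathbb{P}_\gamma$ by the induction hypothesis), shows it meets every condition of $\mathbb{P}_\gamma$, hence of $\mathbb{P}_\alpha$.

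The only step requiring genuine care --- and the one motivating the clause ``$\dot{\mathbb{Q}}_\alpha$ is a $\mathbb{P}^N_\alpha$-name'' in the definition of $\mathbb{P}^N_\alpha$ --- is the equality $\dot{\mathbb{Q}}^N_\alpha[G'] = \dot{\mathbb{Q}}_\alpha[G]$ of the evaluated iterands: one must check that passing from $G$ to $G' = G \cap \mathbb{P}^N_\alpha$ does not alter the value of $\dot{\le}_\alpha$, which holds exactly because every condition $p^i_\beta$ appearing in $\dot{\le}_\alpha$ lies in $\mathbb{P}^N_\alpha$. Granting this verification, the Suslin-case absoluteness arguments collapse to equalities and the induction goes through unchanged; I expect no serious obstacle beyond it.
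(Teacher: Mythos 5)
Your proposal is correct and takes essentially the same route as the paper's own proof: the same simultaneous induction on $\alpha$, with the successor step for clauses 1 and 2 handled by passing from a $\mathbb{P}_\alpha$-generic $G$ to $G\cap\mathbb{P}^N_\alpha$ and using that $\mathbb{P}^N_\alpha$-names evaluate identically under both (Proposition \ref{aswellas}), and clause 3 reduced via Proposition \ref{observation} to the analogue of Lemma \ref{iterationlemma}, which---exactly as you observe---becomes trivial because $\dot{\mathbb{Q}}^N_\alpha$ is either $\{0\}$ or literally the same name $\dot{\mathbb{Q}}_\alpha$, so the Suslin-case $\coanal$-absoluteness collapses to an equality of evaluated partial orders on the fixed universe $\lambda$. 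The limit cases (finite supports for clauses 1--2; countable antichain restricted to some $\mathbb{P}^N_\gamma$ for clause 3) also match the paper verbatim.
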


\begin{proof}
	We proceed by induction on $\alpha$. 
	\par{Concerning the point 1.}
	\begin{itemize}
		\item $\alpha=0.$ Clear.
		\item $\alpha+1.$ Assume $(p_0,\dot{q}_0)\bot(p_1,\dot{q}_1)$ in $\mathbb{P}^N_{\alpha+1}$. If $p_0\bot p_1$ in $\mathbb{P}_\alpha^N$, then by the induction hypothesis $p_0\bot p_1$ in $\mathbb{P}_\alpha$ and we are done. Suppose otherwise, and fix a condition $p\le p_0,p_1$ from $\mathbb{P}_\alpha$. If $p$ can be extended to a $p^*$ such that $p^* \Vdash \dot{\mathbb{Q}}_\alpha^N=\{0\}$, then $p^* \le (p_0,\dot{q}_0),(p_1,\dot{q}_1)$, contradicting $(p_0,\dot{q}_0)\bot(p_1,\dot{q}_1)$. So $p$ forces that $\dot{\mathbb{Q}}_\alpha^N$ is defined in the nontrivial way. Let $G\subseteq \mathbb{P}_\alpha$ be any filter generic over $V$, containing $p$. Since $p_0,p_1 \in G \cap \mathbb{P}_\alpha^N$, we see that 
		$$\dot{q}_0[G\cap \mathbb{P}_\alpha^N] \bot \dot{q}_1[G\cap \mathbb{P}_\alpha^N]$$
		in the model $V[G\cap \mathbb{P}_\alpha^N]$, and so in $V[G]$ as well (see Proposition \ref{aswellas}). Since $p$ and $G$ were arbitrary, it follows that $(p_0,\dot{q}_0)\bot(p_1,\dot{q}_1)$ in $\mathbb{P}_{\alpha+1}$.
		\item $\lim{\alpha}.$ Follows from the induction hypothesis, since the supports are finite. 
	\end{itemize}
	\par{Concerning the point 2.}
	\begin{itemize}
		\item $\alpha=0.$ Clear.
		\item $\alpha+1.$ Assume $(p_0,\dot{q}_0)\le(p_1,\dot{q}_1)$ in $\mathbb{P}^N_{\alpha+1}$. From the induction hypothesis we know, that $p_0\le p_1$ in $\mathbb{P}_\alpha$, and $p_0 \Vdash \dot{q}_0 \le \dot{q}_1$ in $\mathbb{P}_\alpha^N$. We must show that the assertion
		$$p_0 \Vdash \dot{q}_0 \le \dot{q}_1$$
		holds also in $\mathbb{P}_\alpha$. But it follows outright from Proposition \ref{aswellas}.
		
		\item $\lim{\alpha}.$ Follows from the induction hypothesis, since the supports are finite. 
	\end{itemize}
	\par{Concerning the point 3.}
	\begin{itemize}
		\item $\alpha=0.$ Clear.
		\item $\alpha+1.$ The proof is exactly the same, as in the successor case of Theorem \ref{bigthm} point 3, except this time the conclusion of Lemma \ref{iterationlemma} follows straightforward from the relation
		$$\mathbb{P}^N_\alpha \Vdash \dot{\mathbb{Q}}^N_\alpha \lessdot \dot{\mathbb{Q}}_\alpha.$$		
		\item $\lim{\alpha}$. Let $\{\overline{p}_n|\; n<\omega\}$ be a maximal antichain in $\mathbb{P}^N_\alpha$, and fix $\overline{p} \in \mathbb{P}_\alpha$. There is some $\gamma < \alpha$ such that $\overline{p} \in \mathbb{P}_\gamma$. The set $\{\overline{p}_n\restriction \gamma|\; n<\omega\}$ might not be an antichain in $\mathbb{P}^N_\gamma$, however each condition in $\mathbb{P}^N_\gamma$ is compatible with some $p_n\restriction \gamma$. We can refine $\{\overline{p}_n \restriction \gamma|\; n<\omega\}$ to an antichain in $\mathbb{P}^N_\gamma$, and this antichain will remain maximal
		in $\mathbb{P}_\gamma$ by the induction hypothesis. Therefore $\{\overline{p}_n \restriction \gamma|\; n<\omega\}$ meets every condition from $\mathbb{P}_\gamma$, and in particular some $\overline{p}_n \restriction \gamma$ is compatible with $\overline{p}$ in $\mathbb{P}_\gamma$. But then $\overline{p}_n$ is compatible with $\overline{p}$ in $\mathbb{P}_\alpha$. $\qedhere$
	\end{itemize}
	
\end{proof}

\begin{proof}[Proof of Theorem \ref{mainthm2}]
	We start with any model $V\models \zfc + ``2^\omega=\kappa" + \slicekappa$, for example an extension of a model of $\ch$ by $\mathbb{C}_\kappa$ (see Corollary \ref{cohen}). Let $\mathbb{P}=\mathbb{P}_\theta$ be the forcing defined in the beginning of the Section. Suppose that a sequence $\{M_\alpha|\; \alpha<\kappa\}$ witnesses $\slicekappa$ in $V$, and $G\subseteq \mathbb{P}$ is generic over $V$. We aim to shows that the sequence $V[G\cap\mathbb{P}^{M_\alpha}]$ witnesses $\slicekappa$ in $V[G]$. For this we need to show two things
	\begin{enumerate}
		\item If $F \in 2^{<\kappa}\cap V[G]$, then $F \in V[G\cap\mathbb{P}^{M_\alpha}]$ for some $\alpha<\kappa$.
		\item None of the models $V[G\cap\mathbb{P}^{M_\alpha}]$ contains all reals from $V[G]$.
	\end{enumerate}
	
	\par Concerning $1.$ assume that $\mathbb{P}_\theta\Vdash \dot{F}\in 2^\delta$, for some ordinal $\delta<\kappa$. Without loss of generality $\delta=|\delta|\ge \lambda$. We can also assume that 
	
	$$\dot{F}=\{(\alpha,\alpha_n,p_n^\alpha)|\; \alpha<\delta,\; n<\omega\},$$
	and all conditions $p_n^\alpha$ are simple. We fix some elementary submodel $\overline{M}\prec \operatorname{H}((2^\theta)^+)$ of size $\delta$, of which we assume that $\delta+1\subseteq \overline{M}$, and $\{\sigma_\alpha|\; \alpha<\theta\}$, $\dot{F} \in \overline{M}$. Notice that $\delta+1 \subseteq \overline{M}$ guarentees that each of the conditions $p_n^\alpha$ is in $\overline{M}$. We pick $\alpha<\kappa$ big enough, so that $\overline{M}\cap \theta^\lambda \subseteq M_\alpha$. Now Lemma \ref{generalconmpletesuborders} shows that $p_n^\alpha \in \mathbb{P}_\alpha^n$, for all $\alpha<\delta$, $n<\omega$. This shows that $\dot{F}$ is a $\mathbb{P}_\theta^{M_\alpha}$-name, and it follows that 
	$$\dot{F}[G]=\dot{F}[G\cap \mathbb{P}^{M_\alpha}] \in V[G\cap\mathbb{P}^{M_\alpha}].$$
	
	\par Concerning $2.$ fix a real $r \in 2^{\omega} \setminus M_\alpha$. Let $\mathbb{C}_r$ be any representation of $\mathbb{C}$, from which the real $r$ is definable, and $\mathbb{C}_r$ is of the form
	$$\mathbb{C}_r=(\lambda,\le_r).$$
	This of course leaves plenty of space for what specifically $\mathbb{C}_r$ might be, but for the sake of concreteness we can define $\le_r$ as the transitive closure of the union of the following three relations:
	\begin{enumerate}
		\item $\le_r \restriction \omega\times \omega$ isomorphic to the countable atomless Boolean algebra,
		\item $\forall \; 1\le\alpha<\omega \quad \alpha \cdot \omega<_r \alpha \cdot \omega+1 \iff r(\alpha)=1$,
		\item $\forall \; 1\le\alpha<\omega \quad \alpha\cdot \omega>_r\alpha\cdot \omega +1 \iff r(\alpha)=0$.
	\end{enumerate}
	
	We pick $\gamma < \theta$ for which
	$$\mathbb{P}_\gamma \Vdash \dot{\mathbb{Q}}_\gamma=\mathbb{C}_r.$$
	In this case, we also have
	$$\mathbb{P}^{M_\alpha}_\gamma \Vdash \dot{\mathbb{Q}}^{M_\alpha}_\gamma=\{0\},$$
	since $r \notin M_\alpha$.
	
	Let $c$ be the real added by $\dot{\mathbb{Q}}_\gamma$ over $V[G\cap \mathbb{P}_\gamma]$. We claim that $c \notin V[G\cap \mathbb{P}^{M_\alpha}_\theta]$. Suppose towards a contradiction that $c \in V[G\cap \mathbb{P}^{M_\alpha}_\theta]$. Let $\mathbb{R}=\mathbb{P}_\theta^{M_\alpha}/(G\cap \mathbb{P}^{M_\alpha}_{\gamma+1})$. If $c \in V[G\cap \mathbb{P}^{M_\alpha}_\theta]$, then either $c$ is already in the initial fragment $V[G\cap \mathbb{P}^{M_\alpha}_{\gamma+1}]$, or is added generically over it by $\mathbb{R}$. The first option is ruled out, because 
	$$V[G\cap \mathbb{P}^{M_\alpha}_{\gamma+1}]=V[G\cap \mathbb{P}^{M_\alpha}_\gamma]\subseteq V[G\cap \mathbb{P}_\gamma],$$
	and $c$ was added generically over that model. Therefore in $V[G\cap \mathbb{P}^{M_\alpha}_{\gamma+1}]$ there is a $\mathbb{R}$-name $\dot{x}$ such that two of the following hold:
	\begin{itemize}
		\item $\mathbb{R} \Vdash ``\text{$\dot{x}$ is a new real}"$,
		\item $\dot{x}[G\cap \mathbb{P}_\theta^{M_\alpha}]=c.$
	\end{itemize}
	
	Consider now the forcing $\mathbb{R}$ in $V[G\cap \mathbb{P}_{\gamma+1}]$. It is still true that $\mathbb{R}$ forces $\dot{x}$ to be a new real, since this is an absolute property of the forcing and the name (it says that for any $r \in \mathbb{R}$ there is $n<\omega$, such that $r$ does not decide $\dot{x}(n)$). But this is a contradiction, since $c=\dot{x}[G] \in V[G\cap \mathbb{P}_{\gamma+1}]$.
	
	A standard verification shows that $V[G]\models ``2^\omega=\theta"$. This concludes the proof of Theorem \ref{mainthm2}.$\qedhere$
\end{proof}

\begin{cor}
	The following theories are consistent
	$$\zfc+\ma_{\omega_1}+\slicetwo+``2^\omega=\omega_2",$$
	$$\zfc+\ma_{\omega_1}+\slicetwo+``2^\omega=\omega_3",$$
	$$\zfc+\ma_{\omega_2}+\slicethree+``2^\omega=\omega_{29}".$$
\end{cor}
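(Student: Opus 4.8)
The plan is to obtain all three consistency statements as direct instances of Theorem \ref{mainthm2}, reading off the parameters $\lambda<\kappa\le\theta$ line by line. For the first theory I would set $\lambda=\omega_1$, $\kappa=\omega_2$, $\theta=\omega_2$; for the second, $\lambda=\omega_1$, $\kappa=\omega_2$, $\theta=\omega_3$; and for the third, $\lambda=\omega_2$, $\kappa=\omega_3$, $\theta=\omega_{29}$. With these choices the conclusion of Theorem \ref{mainthm2} is literally the displayed theory: $MA_\lambda$ becomes $MA_{\omega_1}$ (resp.\ $MA_{\omega_2}$), $\slicekappa$ becomes $\slicetwo$ (resp.\ $\slicethree$), and $``2^\omega=\theta"$ records the prescribed value of the continuum. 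Note that in the third line $MA_{\omega_2}$ is genuinely below the continuum, since $\lambda=\omega_2<\omega_{29}=\theta=2^\omega$, so there is no conflict with the failure of $MA_{2^\omega}$.

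It then remains to verify the hypotheses of Theorem \ref{mainthm2} in each case. The order conditions $\omega<\kappa\le\theta$ and $\lambda<\kappa$ are immediate ($\omega<\omega_2\le\omega_2$, $\omega<\omega_2\le\omega_3$, $\omega<\omega_3\le\omega_{29}$, together with $\omega_1<\omega_2$ and $\omega_2<\omega_3$), and $\omega_2,\omega_3,\omega_{29}$ are regular. The only real computation is the arithmetic hypothesis $\theta^{<\kappa}=\theta$, that is $\omega_2^{<\omega_2}=\omega_2$, $\omega_3^{<\omega_2}=\omega_3$ and $\omega_{29}^{<\omega_3}=\omega_{29}$. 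Since $\theta$ is regular, $\theta^{<\kappa}=\sup\{\theta^\mu:\mu<\kappa\text{ a cardinal}\}$ is determined by the largest cardinal below $\kappa$, namely $\omega_1$ in the first two lines and $\omega_2$ in the third; so it suffices to see that $\omega_2^{\omega_1}=\omega_2$, $\omega_3^{\omega_1}=\omega_3$ and $\omega_{29}^{\omega_2}=\omega_{29}$. Each of these holds because the exponent ($\omega_1$, $\omega_1$, $\omega_2$ respectively) lies strictly below $\operatorname{cof}(\theta)=\theta$, while $2^{\omega_1}\le\theta$ (resp.\ $2^{\omega_2}\le\theta$) in the ground model we are using.

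The only point that requires care---and the natural place for something to go wrong---is that $\theta^{<\kappa}=\theta$ must hold in the very ground model of Theorem \ref{mainthm2}, the one carrying a $\slicekappa$-sequence. Here one cannot simply invoke GCH, because $\slicekappa$ entails $2^\omega\ge\kappa$ (a strictly increasing $\kappa$-chain of slices of $2^\omega$ forces $|2^\omega|\ge\kappa$ when $\kappa$ is regular), so GCH provably fails at $\omega$. Instead I would take the ground model to satisfy $2^\delta=\kappa$ for every $\delta\in[\omega,\kappa)$ (exactly the arithmetic making $\slicekappa$ available in its most economical form) together with GCH on the interval $[\kappa,\theta)$; a routine cardinal computation using the regularity of $\theta$ then yields $\theta^{<\kappa}=\theta$, and in particular the three equalities above, with no further assumptions. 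Granting Theorem \ref{mainthm2}, the corollary is then nothing more than these three substitutions.
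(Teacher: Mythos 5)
Your proposal is correct and takes essentially the same approach as the paper: the corollary is stated there without proof, being precisely the instantiation of Theorem \ref{mainthm2} at $(\lambda,\kappa,\theta)=(\omega_1,\omega_2,\omega_2)$, $(\omega_1,\omega_2,\omega_3)$, and $(\omega_2,\omega_3,\omega_{29})$, with the hypothesis $\theta^{<\kappa}=\theta$ verified by standard GCH-style arithmetic exactly as you do. Your closing caveat is also well taken: since $\slicekappa$ implies $2^\omega\ge\kappa$, full GCH is incompatible with $\slicekappa$ once $\kappa\ge\omega_2$, so the ground model ``$V\models ZFC+GCH+\slicekappa$'' appearing in the paper's proof of Theorem \ref{mainthm2} must indeed be understood as you propose ($2^\delta=\kappa$ for $\delta\in[\omega,\kappa)$ together with GCH on $[\kappa,\theta)$), which is what the Cohen-real construction of the witnessing sequence actually provides.
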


\section{Final comments}

We proved that $\ma_{\omega_1}$ and $\slice$ are not compatible. It looks reasonable to expect that for any regular cardinal $\kappa$
$$\ma_\kappa \implies \lnot \slicekappa.$$

We would like to thank prof. Piotr Zakrzewski for giving the idea of looking at the cardinal invariant $\sym$, and the anonymous referees for pointing out many flaws in earlier versions of the paper.

\end{document}